\theoremstyle{plain}
\newtheorem{theorem}{Theorem}[section]
\newtheorem{conjecture}[theorem]{Conjecture}
\newtheorem{corollary}[theorem]{Corollary}
\newtheorem{proposition}[theorem]{Proposition}
\newtheorem{lemma}[theorem]{Lemma}
\theoremstyle{definition}
\newtheorem{definition}[theorem]{Definition}
\newtheorem{example}[theorem]{Example}
\newtheorem{prob}[theorem]{Problem}
\newcommand{\I}{\mathcal{I}}
\newcommand{\A}{\mathcal{A}}
\newcommand{\NN}{\mathbb{N}}
\newcommand{\ZZ}{\mathbb{Z}}
\newcommand{\C}{\mathcal{C}}
\newcommand{\F}{\mathbb{F}}
\newcommand{\Func}{\mathrm{Func}}
\newcommand\restr[2]{{% we make the whole thing an ordinary symbol
  \left.\kern-\nulldelimiterspace % automatically resize the bar with \right
  #1 % the function
  \vphantom{|} % pretend it's a little taller at normal size
  \right|_{#2} % this is the delimiter
  }}
\newcommand{\rk}{\mathrm{rk}}
\newcommand{\bs}{\backslash}
\newcommand{\Fl}{\mathcal{F}l}
\newcommand{\gm}{\ensuremath{\mu}}
\title{Chain characteristic polynomials of matroids}
\author{Gary Lazzaro}
\thanks{}
\address{US Naval Academy \footnote[1]{The views expressed in this article are those of the authors and do not reflect the official policy or position of the U.S. Naval Academy, Department of the Navy, the Department of Defense, or the U.S. Government.}\label{foot}\\
  572-C Holloway Rd\\
  Annapolis MD, 21402 USA}
\email[]{lazzaro@usna.edu}
\author{Max Wakefield}
\thanks{}
\address{US Naval Academy\footnotemark[1]\\
  572-C Holloway Rd\\
  Annapolis MD, 21402 USA}
\email[]{wakefiel@usna.edu}
\author{Jason Weiss}
\thanks{}
\address{US Naval Academy\footnotemark[1]\\
  572-C Holloway Rd\\
  Annapolis MD, 21402 USA}
\email[]{m236750@usna.edu}
\begin{document}
\maketitle

\begin{abstract} In this note we introduce a family of polynomials on a matroid derived from chain Tutte polynomials which generalize the classic and ubiquitous characteristic polynomial. We show that the coefficients of these polynomials alternate and present a recursion for any matroid. The classic characteristic polynomials were motivated by the chromatic polynomial on graphs. We define a generalized proper vertex coloring on a graph, which we call coupled multicoloring. We also define a generalized nowhere zero flow, which we call coupled multicommodity flow. Then we show that these chain characteristic polynomials enumerate the number of coupled multicolorings and coupled multicommodity flows on a graph. We conclude by listing multiple problems on enumerative properties of chain characteristic polynomials.\end{abstract}

\section{Introduction}

For $k\in \NN=\{0,1,2,3,\dots \}$ we denote $\{1,2,3,\dots, k\}$ by $[k]$. For any set $S$ we denote the power set of $S$ (the set of all subsets of $S$) by $2^S$ and we use the notation ${S\choose k}=\{X\subseteq S\mid |X|=k\}$. Also, for any subsets, $A,B\subseteq S$, we denote the set of functions from $A$ to $B$ by $\Func(A,B)$ and the set difference as $A-B=\{a\in A| a\notin B\}$. A \emph{matroid} is a pair $M=(\A,\rk)$ where $\A$ is a finite set and $\rk: 2^\A \to \NN$ such that $\rk(\emptyset )=0$, for all $a\in \A$, $\rk(a)\in \{0,1\}$, for all $A\subseteq B\in 2^\A$, $\rk (A)\leq \rk(B)$, and for all $A,B\in 2^\A$, $\rk(A\cap B)+\rk(A\cup B)\leq \rk(A)+\rk(B)$. The \emph{rank} of a matroid $M=(\A,\rk )$ is $\rk(\A)$ and is denoted by $\rk(M)$. For $M=(\A , \rk)$ a matroid the dual matroid is $M^*=(\A,\rk^*)$ where $\rk^*(S)=|S|-(\rk(M)-\rk (E-S))$. Matroids are often considered as generalizations of graphs and abstractions of linear independence of sets of vectors. Whitney defined matroids in \cite{Whitney-35} around the same time he was studying graph colorings. We use \cite{Oxley} and \cite{W76} as our main references for matroid theory.

The classic characteristic polynomial of a matroid $M=(\A, \rk)$ is \[ \chi_M(t)=\sum\limits_{A\subseteq \A }(-1)^{|A|}t^{\rk (M)-\rk(A)}.\] This polynomial is central in matroid and graph theory as it encodes graph coloring and flow information \cite{Birk12} (which we will discuss below), Betti numbers of the complement of a complex realized arrangement associated to the matroid \cite{OS-80}, the chambers of the complement of a real arrangement associated to the matroid \cite{Z75}, and various other geometric enumeration problems like counting points in complements of arrangements over field fields \cite{Ath-96} (for more in depth coverage of the characteristic polynomial see \cite{Ardila-15,Zas87}). 

More recently, there has been significant advances in understanding the coefficients of the characteristic polynomial. Adiprasito, Huh, and Katz proved in \cite{AHK17} a conjecture of Rota, Heron, and Welsh that the coefficients of the characteristic polynomials coefficients are log-concave. Also, recently, the characteristic polynomials appear as a crucial part of the definitions of certain new invariants. In \cite{EPW-16} Elias, Proudfoot, and the second author defined the so called Kazhdan-Lusztig polynomials of a matroid with a recursion involving the characteristic polynomials. In particular, the characteristic polynomial is a $P$-kernel in the sense of Stanley \cite{Stan94} (also see \cite{bren03} and \cite{Proud-18}). Another recent example of the prominence of the characteristic polynomial comes from the matroid motivic zeta function defined by Jensen, Kutler, and Usatine in \cite{JKU-21}. The coefficients of the characteristic polynomial also appear in the volume polynomial of a matroid defined by Eur in \cite{E20}. 

In the literature, there are various generalizations of the characteristic polynomial. Abe, Terao, and the second author in \cite{ATW-07} defined a characteristic polynomial for multiarrangements, but this polynomial coincides with the classic characteristic polynomial in the case of a simple matroid, and it depends on a realizable matroid. In \cite{JW-24} Johnson and the second author defined a generalized characteristic polynomial using a generalized M\"obius function. The generalized characteristic polynomials we define below are different.

Given a matroid $M$ with ground set $\A$ defined by a rank function $\rk$, so $M=(\A ,\rk)$, we call \[ \C_\A^k=\left\{ (A_1,A_2,\dots , A_k)\in \left( 2^\A\right)^k| \ A_1\subseteq A_2\subseteq \cdots \subseteq A_k \subseteq \A\right\}\] the chains in $\A$.
Using these chains, we define the main object of study for this note.

\begin{definition}
The $k^{th}$-chain characteristic polynomial of $M=(\A, \rk)$ is \[ \chi_M^k(t_1,\dots ,t_k)=\sum\limits_{(A_i)\in \C^k_\A} \prod\limits_{i=1}^k (-1)^{|A_i|}t_i^{\rk(M)-\rk(A_i)} .\]
\end{definition}

This is a direct generalization of the classic characteristic polynomials since $\chi_M(t)=\chi_M^1(t)$. The M\"obius polynomial (maybe first defined or named such by Zaslavsky in \cite{Z75} for arrangements) is defined by  \begin{equation}\label{Mobdef}\mathcal{M}_M(s,t)=\sum\limits_{X\leq Y\in L(M)}\gm (X,Y)s^{\rk(X)}t^{\rk (M)-\rk(Y)}\end{equation} where $L(M)$ is the set of flats (subsets of $\A$ that are maximal with respect to rank) and $\mu$ is the M\"obius function defined by $\mu (X ,X)=1$ and \[ \sum\limits_{X\leq Y\leq Z}\gm (X,Y)=0 \] for all $X,Y\in L(M)$. The M\"obius polynomial appeared in the study of certain algebras on directed graphs in \cite{GRSW-05} by Gelfand, Retakh, Serconek, and Wilson and further studied in \cite{RW-09} by Retakh and Wilson. Then Jurrius in \cite{Jur-12} and Johnsen and Verdure in \cite{JV-21} studied M\"obius polynomials for applications in coding theory. Then the second author showed essentially the following using the chain Tutte polynomials (which we review below).

\begin{theorem}[Theorem 1.6, \cite{Wak-23}]\label{Mobius}
For any matroid $M$ the M\"obius polynomial is \[\mathcal{M}_M(s,t)=s^{\rk(M)}\chi^2_M(s^{-1},t).\]
\end{theorem}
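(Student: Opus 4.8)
The plan is to reduce both sides to a sum over pairs of subsets and then recognize the M\"obius function as the unique solution of a counting recursion. First I would expand the right-hand side directly from the definition: since $s^{\rk(M)}(s^{-1})^{\rk(M)-\rk(A_1)}=s^{\rk(A_1)}$, one obtains
\[
s^{\rk(M)}\chi^2_M(s^{-1},t)=\sum_{(A_1,A_2)\in\C^2_\A}(-1)^{|A_1|+|A_2|}\,s^{\rk(A_1)}\,t^{\rk(M)-\rk(A_2)}.
\]
I would then group the chains $(A_1,A_2)$ according to their closures $X=\cl(A_1)$ and $Y=\cl(A_2)$. Because closure preserves rank ($\rk(A_i)=\rk(\cl(A_i))$) and is monotone ($A_1\subseteq A_2\Rightarrow X\le Y$), the monomial $s^{\rk(A_1)}t^{\rk(M)-\rk(A_2)}=s^{\rk(X)}t^{\rk(M)-\rk(Y)}$ depends only on the flats, and only pairs with $X\le Y$ occur. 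Thus the problem reduces to proving, for every pair of flats $X\le Y$ in $L(M)$, the identity $S(X,Y)=\mu(X,Y)$, where
\[
S(X,Y):=\sum_{\substack{A_1\subseteq A_2\\ \cl(A_1)=X,\ \cl(A_2)=Y}}(-1)^{|A_1|+|A_2|}.
\]

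The heart of the argument is to show that $S$ satisfies the recursion that uniquely characterizes the M\"obius function, namely $\sum_{X\le Z\le Y}S(X,Z)=\delta_{X,Y}$. Summing $S(X,Z)$ over all flats $Z\in[X,Y]$ amalgamates the constraints into: $A_1\subseteq A_2$, $\cl(A_1)=X$, and $\cl(A_2)\in[X,Y]$. Here $\cl(A_2)\ge\cl(A_1)=X$ holds automatically, and since $Y$ is a flat, $\cl(A_2)\le Y$ is equivalent to $A_2\subseteq Y$. Writing $A_2=A_1\sqcup C$ with $C\subseteq Y-A_1$ and factoring out the sign of $A_1$, the inner sum over $A_2$ collapses by the elementary identity $\sum_{C\subseteq Y-A_1}(-1)^{|C|}=0$ unless $Y-A_1=\emptyset$. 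Hence only $A_1=Y$ survives, which forces $X=\cl(A_1)=\cl(Y)=Y$, giving $\sum_{X\le Z\le Y}S(X,Z)=\delta_{X,Y}$. By uniqueness of the solution of this recursion (equivalently, by downward induction on the interval $[X,Y]$), $S(X,Y)=\mu(X,Y)$, and assembling the graded pieces yields $s^{\rk(M)}\chi^2_M(s^{-1},t)=\sum_{X\le Y}\mu(X,Y)s^{\rk(X)}t^{\rk(M)-\rk(Y)}=\mathcal{M}_M(s,t)$.

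The step I expect to require the most care is the collapse of the inner sum, because the naive single-subset analogue $\sum_{A:\cl(A)=X}(-1)^{|A|}=\mu(\hat0,X)$ \emph{fails} in the presence of loops. The virtue of the chain formulation is that the coupling $A_1\subseteq A_2$ is precisely what makes the telescoping clean: after summing over the interval, the only contribution comes from the degenerate pair $A_1=A_2=Y$, so loops and parallel elements are handled automatically and no loopless hypothesis on $M$ is needed. I would sanity-check this against the single-loop matroid, where $\mathcal{M}_M=\chi^2_M=1$, to confirm the signs. An alternative route would be to evaluate the inner sum over $A_2$ as the constant term $\chi_{(M|_Y)/A_1}(0)$ of the characteristic polynomial of a minor and invoke the interval isomorphism $L\big((M|_Y)/X\big)\cong[X,Y]$; this also works, but it requires separately discarding the terms with $A_1\subsetneq X$ because the relevant contraction then acquires loops, so I prefer the self-contained recursion above.
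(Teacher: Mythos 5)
Your proof is correct, but note that the paper never proves this statement itself: it is imported wholesale from \cite{Wak-23} (Theorem 1.6 there), where it is obtained via the chain Tutte polynomial machinery, i.e.\ essentially by combining the evaluation in Theorem \ref{tutteval} with results proved in that reference. Your argument is a genuinely self-contained alternative, and its two halves correspond exactly to ingredients this paper only states or cites. Grouping the chains $(A_1,A_2)\in\C^2_\A$ by the closures $(\cl(A_1),\cl(A_2))$ is the $k=2$ case of the M\"obius-function decomposition of $\chi^k_M$ given in Section \ref{Tutte-sec}, and your key identity $S(X,Y)=\mu(X,Y)$ is precisely the assertion ``$\mu^2=\mu$'' that the paper attributes to \cite[Lemma 2.2]{Wak-23} without proof. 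Your verification of that identity is sound: summing $S(X,Z)$ over flats $Z\in[X,Y]$, replacing the condition $\cl(A_2)\le Y$ by $A_2\subseteq Y$ (valid since $Y$ is a flat), and collapsing the inner sum by $\sum_{C\subseteq Y-A_1}(-1)^{|C|}=0$ unless $A_1=Y$ gives $\delta_{X,Y}$, after which uniqueness of the solution to the M\"obius recursion (downward induction on the interval) finishes the argument. You are also right about the delicate point: the coupling $A_1\subseteq A_2$ is exactly what makes loops harmless, whereas the single-subset cross-cut identity $\sum_{\cl(A)=X}(-1)^{|A|}=\mu(\hat 0,X)$ fails for matroids with loops, so no looplessness hypothesis is needed, consistent with the theorem's statement for arbitrary $M$. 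What your route buys is a short, elementary proof that makes the theorem internal to the paper; what the cited route buys is uniformity, since the chain Tutte polynomial framework handles all $k$ at once and simultaneously yields the valuation, duality, and product properties used elsewhere in the paper.
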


In \cite{Bry72} Brylawski presented the deletion-recursion formula of the classic characteristic polynomial and showed that the characteristic polynomial is an evaluation of the Tutte polynomial. The evaluation of the Tutte polynomial is essentially how we come to our definition of the chain characteristic polynomial. 

The Tutte polynomial is one of the most studied invariants on graphs and matroids. The Tutte polynomials coefficients were first written down by Whitney in \cite{Whitney-32}. It was formally defined by Tutte in \cite{Tutte-poly} and generalized to matroids by Crapo in \cite{Crapo-69}. In this work, in order to show the origin of our main objects of study  we need an extension of the classic Tutte polynomial called the chain Tutte polynomial, defined by the second author in \cite{Wak-23}.

\begin{definition}\label{Def-Tutte}

The $k^{th}$ \emph{chain Whitney rank generating polynomial} of $M$ is $$W^k_M((a_i)_1^k;(b_i)_1^k)=\sum\limits_{(S_i)_1^k \in \C^k_\A } \prod\limits_{i=1}^ka_i^{\rk (M)-\rk (S_i)} b_i^{|S_i|-\rk(S_i)}$$  and the $k^{th}$ \emph{chain Tutte polynomial} of $M$ is 
$$T^k_M((x_i)_1^k;(y_i)_1^k)=W^k_M((x_i-1)_1^k;(y_i-1)_1^k).$$ 
\end{definition}

Generalizing Theorem \ref{Mobius} to all chain characteristic polynomials, we get the following by evaluating and reducing $T^k_M$.

\begin{theorem}\label{tutteval}If $M$ is a matroid then $$\chi^k_M(t_1,\dots, t_k)=(-1)^{k\rk (M)}W^k_M((-t_i)_1^k;(-1)_1^k)=(-1)^{k\rk (M)}T^k_M((1-t_i)_1^k;(0)_1^k).$$
\end{theorem}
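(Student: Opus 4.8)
The plan is to establish the two equalities in Theorem~\ref{tutteval} separately, working from the definitions of the chain characteristic polynomial and the chain Whitney rank generating polynomial. The second equality, relating $W^k_M$ to $T^k_M$, is essentially immediate: by Definition~\ref{Def-Tutte} we have $T^k_M((x_i)_1^k;(y_i)_1^k)=W^k_M((x_i-1)_1^k;(y_i-1)_1^k)$, so substituting $x_i=1-t_i$ gives $x_i-1=-t_i$, and substituting $y_i=0$ gives $y_i-1=-1$. Thus $T^k_M((1-t_i)_1^k;(0)_1^k)=W^k_M((-t_i)_1^k;(-1)_1^k)$ with no further work, and the two right-hand sides in the theorem coincide on the nose.

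The substance is the first equality. The plan is to start from
\[
W^k_M((-t_i)_1^k;(-1)_1^k)=\sum_{(S_i)_1^k\in\C^k_\A}\prod_{i=1}^k(-t_i)^{\rk(M)-\rk(S_i)}(-1)^{|S_i|-\rk(S_i)},
\]
obtained by setting $a_i=-t_i$ and $b_i=-1$ in the definition of $W^k_M$. The strategy is to factor the sign out of each factor in the product and then collect the resulting powers of $-1$ over all $k$ indices. Within a single factor indexed by $i$, I would write $(-t_i)^{\rk(M)-\rk(S_i)}=(-1)^{\rk(M)-\rk(S_i)}t_i^{\rk(M)-\rk(S_i)}$ and combine this with $(-1)^{|S_i|-\rk(S_i)}$, so that the two exponents of $-1$ add to $(\rk(M)-\rk(S_i))+(|S_i|-\rk(S_i))=\rk(M)+|S_i|-2\rk(S_i)$. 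Modulo $2$ the term $-2\rk(S_i)$ vanishes, so the sign of the $i$-th factor is $(-1)^{\rk(M)+|S_i|}=(-1)^{\rk(M)}(-1)^{|S_i|}$, and what remains of that factor is exactly $(-1)^{|S_i|}t_i^{\rk(M)-\rk(S_i)}$ together with the leftover global sign $(-1)^{\rk(M)}$.

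Taking the product over $i=1,\dots,k$ then contributes an overall factor of $(-1)^{k\rk(M)}$ from the $k$ copies of $(-1)^{\rk(M)}$, while the remaining product is $\prod_{i=1}^k(-1)^{|S_i|}t_i^{\rk(M)-\rk(S_i)}$, which is precisely the summand defining $\chi^k_M(t_1,\dots,t_k)$. Summing over all chains $(S_i)_1^k\in\C^k_\A$ and pulling the constant $(-1)^{k\rk(M)}$ outside the sum yields $W^k_M((-t_i)_1^k;(-1)_1^k)=(-1)^{k\rk(M)}\chi^k_M(t_1,\dots,t_k)$, which rearranges to the claimed formula. I do not anticipate a genuine obstacle here; the one point demanding care is the parity bookkeeping in the previous step, namely verifying that the $-2\rk(S_i)$ contributions are uniformly even so that the signs collapse cleanly for every term of the sum regardless of the ranks involved. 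Since this holds term-by-term, no cancellation or reindexing of the chain sum is needed, and the identity follows at the level of individual summands.
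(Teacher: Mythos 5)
Your proposal is correct and matches the paper's own route: the paper gives no separate proof beyond noting that the theorem follows ``by evaluating and reducing $T^k_M$,'' which is exactly the substitution $a_i=-t_i$, $b_i=-1$ and term-by-term sign bookkeeping you carry out. Your parity computation $(-1)^{\rk(M)+|S_i|-2\rk(S_i)}=(-1)^{\rk(M)}(-1)^{|S_i|}$ and the final cancellation $(-1)^{2k\rk(M)}=1$ are exactly the details the paper leaves implicit.
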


Ardila and Sanchez in \cite{AS-20} showed that many important matroid invariants are matroid valuations (see \cite{AS-20} or the Appendix of \cite{EHL-23} for details on matroid valuations). Using Theorem 1.5 in \cite{Wak-23} and Theorem \ref{tutteval} we get the following.

\begin{corollary}

For every positive $k$, the $k^{th}$ chain characteristic polynomial is a matroid valuation.

\end{corollary}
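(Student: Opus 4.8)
The plan is to reduce the statement to something already proven. The corollary asserts that $\chi^k_M$ is a matroid valuation. By Theorem~\ref{tutteval}, we have the identity
\[
\chi^k_M(t_1,\dots,t_k)=(-1)^{k\,\rk(M)}\,W^k_M((-t_i)_1^k;(-1)_1^k),
\]
so $\chi^k_M$ is obtained from the chain Whitney rank generating polynomial $W^k_M$ by a single specialization of variables, namely $a_i\mapsto -t_i$ and $b_i\mapsto -1$, followed by multiplication by the scalar $(-1)^{k\,\rk(M)}$. The cited Theorem~1.5 of~\cite{Wak-23} presumably establishes that $W^k_M$ (equivalently $T^k_M$) is itself a matroid valuation, taking values in the polynomial ring in the variables $a_i,b_i$. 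The entire content of the proof is therefore to argue that this valuation property is preserved under the operations I have just named.

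First I would recall the precise meaning of a matroid valuation: a function $f$ on matroids (with values in some abelian group or module) is a valuation if it respects the additive relations coming from matroid polytope subdivisions, i.e.\ whenever $\sum_i c_i\,\mathbf{1}_{P(M_i)}=0$ is a relation among indicator functions of the base polytopes, the corresponding relation $\sum_i c_i\,f(M_i)=0$ holds. Crucially, all matroids appearing in such a relation share the same ground set $\A$ and the same rank $\rk(M)$, since these are determined by the matroid polytope. This last observation is the key technical point, because the scalar prefactor $(-1)^{k\,\rk(M)}$ depends on $\rk(M)$: it is a constant across any single valuative relation, so it factors out cleanly and does not disturb the relation.

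Next I would invoke Theorem~1.5 of~\cite{Wak-23} to assert that $M\mapsto W^k_M$ is a valuation valued in $\ZZ[a_1,\dots,a_k,b_1,\dots,b_k]$. Specialization of the polynomial variables is a ring homomorphism $\ZZ[a_i,b_i]\to\ZZ[t_1,\dots,t_k]$ sending $a_i\mapsto -t_i$ and $b_i\mapsto -1$; since any group homomorphism (in particular a ring homomorphism) applied to a valuation yields a valuation, the composite $M\mapsto W^k_M((-t_i)_1^k;(-1)_1^k)$ is again a valuation. Finally, multiplying a valuation by a fixed scalar still gives a valuation, and by the remark above the factor $(-1)^{k\,\rk(M)}$ is constant within each relation, so multiplying by it preserves the valuative identities. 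Combining these three steps — the valuation property of $W^k_M$, stability under specialization, and stability under the rank-dependent sign — yields that $\chi^k_M$ is a valuation.

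**The main obstacle** is the rank-dependent prefactor $(-1)^{k\,\rk(M)}$: naively one might worry that multiplying a valuation by a quantity that varies with the matroid breaks the valuation property, since in general $M\mapsto g(M)\,f(M)$ need not be valuative. The resolution is that $\rk(M)$ is an invariant of the matroid polytope, hence constant on any valuative relation, so the sign is a genuine constant that factors out. I would make this explicit rather than gloss over it, as it is the one place where the argument requires a real (if brief) observation beyond formal manipulation. The remaining steps are routine applications of the fact that valuations form a module closed under postcomposition with linear maps.
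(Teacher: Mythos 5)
Your proposal is correct and follows the same route as the paper, which derives the corollary in one line from Theorem~1.5 of \cite{Wak-23} (valuativity of the chain Tutte/Whitney polynomial) together with the evaluation formula in Theorem~\ref{tutteval}. Your explicit treatment of the rank-dependent sign $(-1)^{k\,\rk(M)}$ --- noting that all matroids in a valuative relation share a ground set and rank, so the sign is constant and factors out --- is a detail the paper leaves implicit, and it is handled correctly.
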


Because of Theorem \ref{tutteval} and properties of the chain Tutte polynomials which we outline in section \ref{Tutte-sec} we get the following product formula on chain characteristic polynomials.

\begin{corollary}\label{prodform}

If $M_1$ and $M_2$ are matroids then for all $k>0$ $$\chi^k_{M_1\oplus M_2}(t_1,\dots ,t_k)= \chi^k_{M_1}(t_1,\dots ,t_k)\chi^k_{M_2}(t_1,\dots ,t_k).$$
    
\end{corollary}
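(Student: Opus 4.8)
The plan is to reduce the statement to the corresponding multiplicativity of the chain Whitney rank generating polynomial $W^k_M$ under direct sums, which is among the properties recorded in Section \ref{Tutte-sec}, and then to bookkeep the sign factor appearing in Theorem \ref{tutteval}. First I would recall that for the direct sum $M_1\oplus M_2$ with ground set $\A=\A_1\sqcup \A_2$ the rank function decomposes as $\rk(S)=\rk_{M_1}(S\cap \A_1)+\rk_{M_2}(S\cap \A_2)$, and likewise $|S|=|S\cap \A_1|+|S\cap \A_2|$. Consequently the assignment $(S_i)_1^k\mapsto \big((S_i\cap \A_1)_1^k,(S_i\cap \A_2)_1^k\big)$ is a bijection $\C^k_\A\to \C^k_{\A_1}\times \C^k_{\A_2}$: intersecting a chain with a fixed subset preserves inclusions, and a pair of chains is reassembled by taking unions, with disjointness of $\A_1$ and $\A_2$ guaranteeing the two operations are mutually inverse.

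Under this bijection each exponent $\rk(M)-\rk(S_i)$ and $|S_i|-\rk(S_i)$ splits as a sum of the corresponding exponents for $M_1$ and $M_2$, so every monomial summand of $W^k_{M_1\oplus M_2}$ factors as a product of a summand for $M_1$ and a summand for $M_2$. Summing over the product indexing set then yields the multiplicative identity
\[ W^k_{M_1\oplus M_2}((a_i)_1^k;(b_i)_1^k)=W^k_{M_1}((a_i)_1^k;(b_i)_1^k)\,W^k_{M_2}((a_i)_1^k;(b_i)_1^k). \]
With this in hand I would finish by applying Theorem \ref{tutteval}. Evaluating at $a_i=-t_i$ and $b_i=-1$ gives $W^k_{M_1\oplus M_2}((-t_i)_1^k;(-1)_1^k)=W^k_{M_1}((-t_i)_1^k;(-1)_1^k)\,W^k_{M_2}((-t_i)_1^k;(-1)_1^k)$. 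Multiplying by $(-1)^{k\rk(M_1\oplus M_2)}$ and using the additivity $\rk(M_1\oplus M_2)=\rk(M_1)+\rk(M_2)$ to split the sign as $(-1)^{k\rk(M_1)}(-1)^{k\rk(M_2)}$, the left-hand side becomes $\chi^k_{M_1\oplus M_2}$ while the right-hand side becomes $\chi^k_{M_1}\,\chi^k_{M_2}$, as desired.

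The only genuine content is the multiplicativity of $W^k_M$, and even that is routine once the chain-splitting bijection is set up; since it is already listed as a property of the chain Tutte polynomial in Section \ref{Tutte-sec}, the main thing to watch is the sign bookkeeping, which works out precisely because the overall sign $(-1)^{k\rk(M)}$ is exponential in $\rk(M)$ and hence itself multiplicative over direct sums. I therefore expect no serious obstacle: the argument is essentially a one-line deduction from Theorem \ref{tutteval} together with the product formula for $W^k_M$.
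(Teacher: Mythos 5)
Your proposal is correct and follows essentially the same route as the paper: reduce the statement via Theorem \ref{tutteval} to multiplicativity of the chain Tutte/Whitney polynomial over direct sums, then split the sign $(-1)^{k\rk(M_1\oplus M_2)}$ using additivity of rank. The only difference is that you re-prove the multiplicativity (for $W^k_M$, via the chain-splitting bijection) where the paper simply cites it as Proposition \ref{Tprod}, an equivalent statement under the change of variables $x_i=a_i+1$, $y_i=b_i+1$.
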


Now we examine the recursion given for the chain Tutte polynomials in Theorem \ref{recursionTutte} applied to chain characteristic polynomials. In order to do this, we substitute the evaluation from Theorem \ref{tutteval} into the formula in Theorem \ref{recursionTutte}.

\begin{proposition}

If $M$ is a matroid with ground set $\A$ and $a\in \A$ is not a loop nor a coloop and $k>1$ then

\begin{align*}
\chi^k_{M}(t_1,\dots ,t_k)=&\sum\limits_{j=0}^k sT_{M,a}^{k,j}((1-t_i)_1^k;(0)_1^k) \\
=&\chi^k_{M\bs a}(t_1,\dots ,t_k)-\chi^k_{M/ a}(t_1,\dots ,t_k) \\
%(-1)^{\rk (M\backslash a)+\rk(M/a)}
 &-\sum\limits_{j=1}^{k-1}\sum\limits_{(S_i)\in \C_{\A -a}^k}(-1)^{\sum\limits_{i=1}^k |S_i|}\left[ \prod\limits_{i=1}^j t_i^{\rk(M\bs a)-\rk_{M\bs a} (S_i)}\right] \left[ \prod\limits_{i=j+1}^k t_i^{\rk(M/ a)-\rk_{M/ a} (S_i)}\right]  . \\ 
\end{align*}
    
\end{proposition}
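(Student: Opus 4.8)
The plan is to combine the two tools already in hand: the evaluation of Theorem \ref{tutteval}, which writes $\chi^k_M(t_1,\dots,t_k)=(-1)^{k\rk(M)}T^k_M((1-t_i)_1^k;(0)_1^k)$, together with the deletion--contraction recursion of Theorem \ref{recursionTutte}, which decomposes $T^k_M$ as the sum $\sum_{j=0}^k T^{k,j}_{M,a}$. Substituting the recursion into the evaluation immediately yields the first displayed equality, so that the real content lies in evaluating each summand $sT^{k,j}_{M,a}((1-t_i)_1^k;(0)_1^k)$ and recognizing the three types of terms it produces.

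To carry this out I would first fix the indexing of the recursion. A chain $(S_1\subseteq\cdots\subseteq S_k)$ in $\A$ is determined by which of its members contain the distinguished element $a$; since the $S_i$ are nested, there is a unique threshold $j\in\{0,1,\dots,k\}$ with $a\notin S_1,\dots,S_j$ and $a\in S_{j+1},\dots,S_k$. The summand $T^{k,j}_{M,a}$ collects exactly these chains, and erasing $a$ sends them bijectively onto chains $(S_i)\in\C^k_{\A-a}$. The essential rank translations are the standard minor identities: since $a$ is not a coloop, $\rk(M\bs a)=\rk(M)$ and $\rk_{M\bs a}(S_i)=\rk_M(S_i)$ whenever $a\notin S_i$; since $a$ is not a loop, $\rk(\{a\})=1$, so $\rk(M/a)=\rk(M)-1$ and $\rk_M(S_i\cup a)=\rk_{M/a}(S_i)+1$ on the coordinates containing $a$.

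With these identities in place I would evaluate a single summand in Whitney form. Setting $y_i-1=-1$ and $x_i-1=-t_i$ in $W^{k,j}_{M,a}$, each factor with index $i\le j$ contributes a power of $-t_i$ governed by the deletion rank and a power of $-1$ governed by $|S_i|-\rk_{M\bs a}(S_i)$, while each factor with index $i>j$ contributes the analogous contraction data together with the shifts coming from $\rk_M(S_i\cup a)=\rk_{M/a}(S_i)+1$ and $|S_i\cup a|=|S_i|+1$. Collecting the powers of $-1$, multiplying by the global prefactor $(-1)^{k\rk(M)}$, and using $\rk(M\bs a)=\rk(M)$ and $\rk(M/a)=\rk(M)-1$ should reduce each summand to a sum over $\C^k_{\A-a}$ of $(-1)^{\sum_i|S_i|}$ times a product of deletion monomials on the first $j$ coordinates and contraction monomials on the last $k-j$. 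The boundary summands $j=k$ and $j=0$ then collapse to $\chi^k_{M\bs a}$ and to $\chi^k_{M/a}$ respectively, and the summands with $1\le j\le k-1$ are precisely the cross terms of the statement.

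The step I expect to be the main obstacle is the sign bookkeeping of the previous paragraph. Each monomial accumulates signs from three independent sources---the substitution $x_i-1=-t_i$, the substitution $y_i-1=-1$, and the rank shift on the contracted coordinates---and these must all be reconciled against the prefactor $(-1)^{k\rk(M)}$ before the exponents of $-1$ simplify, since it is the cancellation of the $(-1)^{2\rk(S_i)}$ contributions that collapses the residual sign onto the parities of $\sum_i|S_i|$ and of the factors weighted by $j$ and $k-j$. I would therefore perform this reduction one coordinate at a time and record the parity at each stage, because it is exactly here that the deletion and contraction minors separate out, the boundary summands are identified with $\chi^k_{M\bs a}$ and $\chi^k_{M/a}$, and the coefficients of the cross terms are pinned down.
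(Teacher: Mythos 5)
Your route---substituting Theorem \ref{tutteval} into Theorem \ref{recursionTutte} and evaluating each split summand---is exactly the paper's own (the paper offers nothing beyond that one-sentence remark), but the step you defer as ``the main obstacle,'' the sign bookkeeping, is precisely where your claimed conclusion fails: carried out, it does not land on the statement. Evaluating at $x_i=1-t_i$, $y_i=0$ gives $(-1)^{\rk(M\bs a)+|S_i|}t_i^{\rk(M\bs a)-\rk_{M\bs a}(S_i)}$ on each deletion coordinate and $(-1)^{\rk(M/a)+|S_i|}t_i^{\rk(M/a)-\rk_{M/a}(S_i)}$ on each contraction coordinate, so
\begin{equation*}
sT^{k,j}_{M,a}\bigl((1-t_i)_1^k;(0)_1^k\bigr)=(-1)^{j\rk(M\bs a)+(k-j)\rk(M/a)}\sum_{(S_i)\in \C_{\A-a}^{k}}(-1)^{\sum_i|S_i|}\prod_{i=1}^{j}t_i^{\rk(M\bs a)-\rk_{M\bs a}(S_i)}\prod_{i=j+1}^{k}t_i^{\rk(M/a)-\rk_{M/a}(S_i)}.
\end{equation*}
Since $a$ is neither a loop nor a coloop, $\rk(M\bs a)=\rk(M)$ and $\rk(M/a)=\rk(M)-1$, so this prefactor equals $(-1)^{k\rk(M)}(-1)^{k-j}$. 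Hence substitution actually yields $\chi^k_M=(-1)^{k\rk(M)}\sum_j sT^{k,j}_{M,a}$ evaluated at these points (the first displayed equality, which you say is ``immediate,'' is missing this prefactor), and after cancelling the prefactor the $j$-th summand enters with the $j$-dependent sign $(-1)^{k-j}$, not $-1$: the computation produces
\begin{equation*}
\chi^k_M=\chi^k_{M\bs a}+(-1)^{k}\chi^k_{M/a}+\sum_{j=1}^{k-1}(-1)^{k-j}\sum_{(S_i)\in \C_{\A-a}^{k}}(-1)^{\sum_i|S_i|}\Bigl[\prod_{i=1}^{j}t_i^{\rk(M\bs a)-\rk_{M\bs a}(S_i)}\Bigr]\Bigl[\prod_{i=j+1}^{k}t_i^{\rk(M/a)-\rk_{M/a}(S_i)}\Bigr],
\end{equation*}
which is consistent with the $(-1)^k\chi^k_{M/a}$ appearing in the Corollary of Section \ref{Tutte-sec}, but not with the uniform minus signs of the Proposition. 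Your assertion that the boundary summand $j=0$ collapses to $\chi^k_{M/a}$ and that the middle summands ``are precisely the cross terms of the statement'' is therefore exactly the point that breaks; the $j$-dependent parity you yourself notice (``the factors weighted by $j$ and $k-j$'') survives into the final formula and cannot be made to disappear.

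A small example shows this is not repairable bookkeeping but a genuine discrepancy. Take $M=U_{1,2}$ (two parallel non-loops $a,b$) and $k=2$. Then $\chi^2_M=t_1t_2-t_1+1$, $\chi^2_{M\bs a}=t_1t_2-t_1+1$, $\chi^2_{M/a}=1$, and the $j=1$ chain sum over $(S_1\subseteq S_2)\subseteq\{b\}$ equals $t_1-t_1+1=1$. The Proposition's right-hand side then evaluates to $(t_1t_2-t_1+1)-1-1=t_1t_2-t_1-1\neq\chi^2_M$, while the $(-1)^{k-j}$-signed formula above gives $(t_1t_2-t_1+1)+1-1=\chi^2_M$ correctly. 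So if you execute your plan faithfully, coordinate by coordinate as you propose, you will prove the corrected identity displayed above and thereby falsify the statement as printed; a proof of the Proposition exactly as stated is not possible.
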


In \cite{JV-21} Johnsen and Verdure proved that the coefficients of the M\"obius polynomial alternate. We extend this result in a few ways. First in section \ref{Tutte-sec} we define a generalized M\"obius function. Then we compute the sign of that M\"obius function in Theorem \ref{thm-sign} and apply it to the chain characteristic polynomials. 

\begin{corollary}\label{cor-sign}
    The coefficients of $\chi^k_M$ alternate in sign by total degree.
\end{corollary}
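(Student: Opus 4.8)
The plan is to rewrite $\chi^k_M$ as a sum over chains of flats whose coefficients are the generalized M\"obius function of Section \ref{Tutte-sec}, and then read off the sign of each coefficient from Theorem \ref{thm-sign}. Since the exponent $\rk(M)-\rk(A_i)$ in the definition of $\chi^k_M$ depends only on $\rk(A_i)=\rk(\cl(A_i))$, I would group the chains $A_1\subseteq\cdots\subseteq A_k$ in $\C^k_\A$ according to their chains of closures $F_i=\cl(A_i)$, using that $A_i\subseteq A_j$ forces $F_i\leq F_j$ in $L(M)$. This produces
\[
\chi^k_M(t_1,\dots,t_k)=\sum_{F_1\leq\cdots\leq F_k}\mu^k(F_1,\dots,F_k)\prod_{i=1}^k t_i^{\rk(M)-\rk(F_i)},\qquad \mu^k(F_1,\dots,F_k)=\sum_{\substack{A_1\subseteq\cdots\subseteq A_k\\ \cl(A_i)=F_i}}(-1)^{\sum_i|A_i|},
\]
the generalized M\"obius function; for $k=2$ one checks via Theorem \ref{Mobius} that $\mu^2$ is the ordinary M\"obius function $\mu$ of $L(M)$.

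Next I would fix a monomial $t_1^{d_1}\cdots t_k^{d_k}$ and collect its coefficient, namely the sum of $\mu^k(F_1,\dots,F_k)$ over all chains of flats with $\rk(F_i)=\rk(M)-d_i$. The key point is that the exponents $d_i$ determine the ranks of the flats, while Theorem \ref{thm-sign} should express the sign of $\mu^k(F_1,\dots,F_k)$ as $(-1)$ raised to an integer combination of $\rk(F_1),\dots,\rk(F_k)$ in which each rank occurs with odd coefficient (as for $k=2$, where the sign is $(-1)^{\rk(F_2)-\rk(F_1)}$). Substituting $\rk(F_i)=\rk(M)-d_i$ and reducing modulo $2$, this sign becomes $(-1)^{c}(-1)^{d_1+\cdots+d_k}$ with $c$ a constant depending only on $k$ and $\rk(M)$. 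Thus every summand contributing to a fixed monomial has the same sign, so there is no cancellation and the coefficient inherits the sign $(-1)^{c}(-1)^{D}$ where $D=d_1+\cdots+d_k$ is the total degree. Since $(-1)^{D}$ flips whenever $D$ increases by one, the coefficients of $\chi^k_M$ alternate by total degree.

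The genuinely hard part is Theorem \ref{thm-sign} itself; granting it, the corollary is bookkeeping. The one delicate point in that bookkeeping is that collapsing all flats of a common rank into a single monomial must not mix terms of opposite sign, and this is exactly guaranteed by the fact that Theorem \ref{thm-sign} makes the sign of $\mu^k$ a function of the ranks alone. It remains only to confirm that the grouping by closures is a genuine partition of $\C^k_\A$ — each subset-chain has a unique closure-chain and every closure-chain arises — so that no chain is over- or under-counted.
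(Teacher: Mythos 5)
Your proposal is correct and follows essentially the same route as the paper: the paper's own derivation of Corollary~\ref{cor-sign} consists of exactly your two steps, namely the Proposition in Section~\ref{Tutte-sec} expressing $\chi^k_M$ as $\sum_{(X_i)\in \Fl^k_M}\mu^k(X_1,\dots,X_k)\prod_i t_i^{\rk(M)-\rk(X_i)}$ (your grouping of subset-chains by their closure-chains) followed by Theorem~\ref{thm-sign}, which forces every contribution to a monomial of total degree $D$ to carry the same sign $(-1)^{k\rk(M)}(-1)^{D}$. The only caveat, which your write-up shares with the paper itself, is that Theorem~\ref{thm-sign} is stated for simple matroids while the corollary is stated for all $M$, so the reduction to the simple case is left implicit in both arguments.
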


Now, we restrict the remainder of our study to graphic matroids. Let $G=(V,E)$ be a (simple) directed graph (in which case then we view $E\subseteq V\times V$) or an undirected graph (in which case then we view $E\subseteq {V\choose 2}$). We use \cite{Boll-98} by  Bollob\'as for our general reference for graph theory. Given a subset of edges $A\subseteq E$ we denote by $c(A)$ the number of connected components of the graph restricted to $A$: $G|_A=(V,A)$. The rank of a subset of edges $A\subseteq E$ is $\rk (A)=|V|-c(A)$ and the rank of the graph is $\rk(G)=\rk(E)$. A \emph{proper vertex coloring} of a graph $G$ is a function $f: V\to \C$ where $\C$ is a finite set (the colors) and for all $\{i,j\}\in E$, $f(i)\neq f(j)$. Then the \emph{chromatic polynomial of $G$} is $\chi_G(t)=$the number of proper vertex colorings of $G$ with $|\C|=t$. In \cite{Birk12} Birkhoff essentially showed the following.

\begin{theorem}[\cite{Birk12}]\label{Birk-thm}
For any undirected simple graph $G$, the chromatic polynomial is equal to the characteristic polynomial, up to a factor, of the matroid associated to the graph: \[ \chi_G(t)=t^{c(E)}\chi^1_{M(G)}(t)\] where $M(G)=(E,\rk)$ and $\rk$ is the graph's rank function.
\end{theorem}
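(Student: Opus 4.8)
The plan is to prove the classical relationship between the chromatic polynomial and the characteristic polynomial via the standard inclusion-exclusion / deletion-contraction argument, adapted to the rank function of the graphic matroid $M(G)$. I would proceed by establishing an explicit combinatorial formula for $\chi_G(t)$ as an alternating sum over subsets of edges, and then matching it term-by-term against $t^{c(E)}\chi^1_{M(G)}(t)$ after unwinding the definition of the chain characteristic polynomial at $k=1$.

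\textbf{Step 1: A subset-sum formula for the chromatic polynomial.} First I would fix a set $\C$ of colors with $|\C|=t$ and count proper colorings by inclusion-exclusion over the ``bad'' events. For each edge $e=\{u,v\}$ let $B_e$ be the set of colorings $f\colon V\to\C$ with $f(u)=f(v)$. A coloring is proper precisely when it lies in none of the $B_e$, so by inclusion-exclusion $\chi_G(t)=\sum_{A\subseteq E}(-1)^{|A|}\,\#\{f\colon V\to\C \text{ constant on each component of }G|_A\}$. The number of such $f$ is $t^{c(A)}$, since a coloring constant on components of $G|_A=(V,A)$ is exactly a free choice of one color per connected component. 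Hence
\[
\chi_G(t)=\sum_{A\subseteq E}(-1)^{|A|}t^{c(A)}.
\]

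\textbf{Step 2: Rewriting in terms of the rank function.} Next I would substitute the graphic-matroid rank $\rk(A)=|V|-c(A)$, so that $c(A)=|V|-\rk(A)$, giving $\chi_G(t)=\sum_{A\subseteq E}(-1)^{|A|}t^{|V|-\rk(A)}$. Writing $|V|=(|V|-\rk(E))+\rk(E)=c(E)+\rk(M(G))$, since $\rk(M(G))=\rk(E)=|V|-c(E)$, I factor out $t^{c(E)}$ to obtain
\[
\chi_G(t)=t^{c(E)}\sum_{A\subseteq E}(-1)^{|A|}t^{\rk(M(G))-\rk(A)}.
\]
By the Definition of the chain characteristic polynomial with $k=1$, the ground set $\A=E$, and the identification $\C^1_E=2^E$, the inner sum is exactly $\chi^1_{M(G)}(t)$, which completes the proof.

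\textbf{Main obstacle.} The conceptual work is entirely in Step 1: justifying the inclusion-exclusion count and, in particular, verifying that the number of colorings that are constant on the components of $G|_A$ is precisely $t^{c(A)}$. This requires care because $G|_A=(V,A)$ retains all vertices of $G$, so isolated vertices (vertices meeting none of the edges of $A$) each count as their own component and contribute their own free color choice; getting the bookkeeping of $c(A)$ right for all $A\subseteq E$, including $A=\emptyset$ where $c(\emptyset)=|V|$, is the delicate point. Once that combinatorial identity is secured, Steps 2 is a routine reindexing using $\rk(A)=|V|-c(A)$ and the $k=1$ specialization of the chain characteristic polynomial, so I expect no further difficulty there.
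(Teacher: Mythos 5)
Your proof is correct: the inclusion-exclusion over edge subsets, the count $t^{c(A)}$ (with isolated vertices correctly contributing components), and the reindexing via $\rk(A)=|V|-c(A)$ and $|V|=c(E)+\rk(M(G))$ are all sound, and the inner sum is indeed the $k=1$ chain characteristic polynomial. The paper gives no proof of this statement---it is quoted as Birkhoff's classical result and used as the base case in the proof of Theorem \ref{multichrom}---and your argument is the standard one, employing exactly the same inclusion-exclusion technique the paper itself uses for its $k>1$ generalization.
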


In section \ref{graps-sec} we define a generalized coloring which we call coupled multicoloring (see definition \ref{def-coupled-multi}). This coupled multicoloring is exactly a proper vertex coloring in the case $k=1$. From this we get a coupled multicoloring chromatic function which we denote by $\chi_G^k(t_1,\ldots ,t_k)$ (see definition \ref{multichrom-def}). One of our main results is the following which we prove in section \ref{graps-sec} using an inclusion-exclusion argument.

\begin{theorem}\label{multichrom}
    If $G$ is a simple graph and $k>0$ then \begin{align*}\chi_G^k(t_1,\dots ,t_k) &= (t_1\cdot t_2\cdots t_k)^{c(G)} \chi_{M(G)}^k(t_k, t_{k-1},\ldots ,t_1) \\
    &=(-1)^{\rk (G)}(t_1\cdots t_k)^{c(G)} T^k_G(1-t_k,1-t_{k-1},\dots ,1-t_1;0,\dots ,0).\end{align*}
\end{theorem}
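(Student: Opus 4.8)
The plan is to prove the first equality by an inclusion–exclusion sieve that generalizes Birkhoff's argument for Theorem~\ref{Birk-thm}, and then to obtain the second equality immediately by substituting Theorem~\ref{tutteval}. I would begin from Definition~\ref{def-coupled-multi} and Definition~\ref{multichrom-def}, recalling that $\chi_G^k(t_1,\dots,t_k)$ counts tuples of vertex colorings $(f_1,\dots,f_k)$, with $f_j\colon V\to\C_j$ and $|\C_j|=t_j$, subject to the coupling condition imposed edge by edge. The engine is the elementary count underlying Birkhoff's theorem: for a fixed edge set $B\subseteq E$, the number of colorings $f\colon V\to\C_j$ that are constant on every connected component of $G|_B=(V,B)$ is exactly $t_j^{c(B)}$, since such an $f$ is the same datum as a coloring of the $c(B)$ components. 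I would set up the sieve so that, at each level $j$, a subset $B_j\subseteq E$ records a set of edges forced to be monochromatic under $f_j$; summing over $B_j$ with sign $(-1)^{|B_j|}$ sieves out the properness requirement exactly as in the single–coloring identity $\chi_G(t)=\sum_{B\subseteq E}(-1)^{|B|}t^{c(B)}$.

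The heart of the argument is that the coupling condition of Definition~\ref{def-coupled-multi}, once the sieve is applied simultaneously at all $k$ levels and the order of summation is interchanged so that the tuple $(f_j)$ is summed last, forces the forced-monochromatic sets to form a descending chain $B_1\supseteq B_2\supseteq\cdots\supseteq B_k$ in $2^E$. Thus I expect to arrive at
\[ \chi_G^k(t_1,\dots,t_k)=\sum_{B_1\supseteq B_2\supseteq\cdots\supseteq B_k}\ \prod_{j=1}^k(-1)^{|B_j|}\,t_j^{\,c(B_j)}. \]
This identity is the crux, and the step I expect to be the main obstacle: one must check that the per-edge coupling indicator collapses, after the telescoping cancellation in the alternating sum, precisely to the indicator of a nested chain of forced sets, while tracking that a level-$j$ coloring contributes the variable $t_j$ and the factor $t_j^{c(B_j)}$. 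The $k=1$ instance is exactly Birkhoff's theorem, so this is genuinely the place where the generalization must be verified rather than imported.

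Once the chain sum is in hand, the remaining steps are bookkeeping. Using $\rk(B)=|V|-c(B)$ and $\rk(G)=|V|-c(G)$ gives $c(B)=c(G)+\rk(G)-\rk(B)$, so each factor splits as $t_j^{c(B_j)}=t_j^{c(G)}\,t_j^{\rk(G)-\rk(B_j)}$; pulling the $k$ factors $t_j^{c(G)}$ out of the sum produces the prefactor $(t_1\cdots t_k)^{c(G)}$. Reindexing the descending chain by $A_i:=B_{k+1-i}$ turns it into an ascending chain $A_1\subseteq\cdots\subseteq A_k$ in $\C^k_E$, and rewrites the residual sum as $\sum_{A_1\subseteq\cdots\subseteq A_k}\prod_{i=1}^k(-1)^{|A_i|}t_{k+1-i}^{\rk(G)-\rk(A_i)}$; this is $\chi^k_{M(G)}$ evaluated with its arguments in reversed order, which is precisely the source of the order $t_k,t_{k-1},\dots,t_1$ in the statement, and establishes the first equality. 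Finally, applying Theorem~\ref{tutteval} to rewrite $\chi^k_{M(G)}(t_k,\dots,t_1)$ as an evaluation of the chain Tutte polynomial $T^k_G$ at $(1-t_k,\dots,1-t_1;0,\dots,0)$ yields the second equality and completes the proof.
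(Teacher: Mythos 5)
Your overall architecture is sound: the chain-sum identity you display is true, it is equivalent (after your substitution $c(B)=c(G)+\rk(G)-\rk(B)$ and the reindexing $A_i:=B_{k+1-i}$) to the first equality of the theorem, and the appeal to Theorem~\ref{tutteval} is exactly how the second equality is obtained, here and in the paper. But there is a genuine gap, and you flag it yourself: the chain-sum identity is never proved. You assert that the coupling condition, "once the sieve is applied simultaneously at all $k$ levels," forces the sets to form a descending chain, and then write that "one must check" the collapse of the per-edge coupling indicator --- but no argument for that collapse is given, and it is not a formality: it is essentially the entire content of the theorem, since (as you correctly say) everything after it is bookkeeping. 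In particular, a sieve in which each $B_j$ independently ranges over $2^E$ does not by itself produce nested sets, and nothing in your sketch explains how the even/odd implication structure of Definition~\ref{def-coupled-multi} (including the negated condition $f_k(a)\neq f_k(b)$ in the odd case) gets converted into the indicator of a chain with the stated signs.

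Two ways to close the gap. The paper's route is induction on $k$: per edge $e=\{a,b\}$ define the "bad" sets $P^1_e=\{f_1 : f_1(a)=f_1(b)\}$ and recursively $P^m_e=\{(f_1,\dots,f_m) : f_1\in P^1_e \text{ and } (f_2,\dots,f_m)\in \overline{P^{m-1}_e}\}$; verify that $\overline{P^k_e}$ is precisely the coupled condition of Definition~\ref{def-coupled-multi} (this is where the even/odd cases are absorbed); then use the product decomposition $\bigcap_{e\in A}P^k_e=\bigcap_{e\in A}P^1_e\times\bigcap_{e\in A}\overline{P^{k-1}_e}$, one round of inclusion--exclusion, the count $\bigl|\bigcap_{e\in A}P^1_e\bigr|=t_1^{c(A)}$, and the inductive hypothesis applied to $G|_A$; the chain appears because the inner sum runs over chains contained in $A$. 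Alternatively, your one-shot sieve can be completed by first proving the per-edge identity: the indicator of "coupled on $e$" equals $\sum_{j=0}^{k}(-1)^j\prod_{i=1}^{j}[f_i(a)=f_i(b)]$ (immediate by induction from the recursion above). Expanding the product of these sums over all edges amounts to choosing a depth $j_e\in\{0,\dots,k\}$ for each edge; setting $B_i=\{e : j_e\geq i\}$ yields automatically nested sets with total sign $(-1)^{\sum_i |B_i|}$, and summing over the colorings then gives $\prod_i t_i^{c(B_i)}$. Either argument turns your sketch into a proof (the second is arguably cleaner than the paper's induction, and avoids its induced-subgraph rank bookkeeping); without one of them, the crux is asserted rather than established.
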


The story for flows on graphs is similar. A \emph{nowhere zero flow} on a directed graph $G$ is a function $f:E\to A$ where $A$ is an Abelian group (viewed additively) such that for all $(i,j)\in E$ we have $f(i,j)\neq 0$ and for all $i\in V$ Kirchhoff's first law holds: $$\sum\limits_{\substack{j\in V\\(i,j)\in E}}f(i,j)=\sum\limits_{\substack{j\in V\\(j,i)\in E}}f(j,i) .$$ Then $Flow^1_G(t)=$the number of non-where zero flows on $G$ with abelian group $A$ such that $|A|=t$ is called the flow polynomial of $G$. Tutte proved the following.

\begin{theorem}[\cite{Tutte-poly}]\label{flow1-poly-thm}

For any undirected simple graph $G$, the flow polynomial is \[ Flow_G^1(t)=(-1)^{|E|-\rk(E)}T_{M(G)}^1(0;1-t) = (-1)^{|E|-\rk(E)} \chi_{M(G)^*}^1(t) .\]
    
\end{theorem}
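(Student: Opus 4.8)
Set $M=M(G)$ and write $\rk$ for its (equivalently the graph's) rank function. The plan is to produce a single inclusion--exclusion formula for $Flow_G^1(t)$ and then read it off in two ways: once as the Tutte evaluation $(-1)^{|E|-\rk(E)}T^1_M(0;1-t)$, and once as the dual characteristic polynomial $\chi^1_{M^*}(t)$.

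The combinatorial heart is the inclusion--exclusion step. I would use the standard fact that for a finite abelian group $A$ with $|A|=t$ the $A$-flows on $G$ (functions $E\to A$ obeying Kirchhoff's law, with no nonvanishing condition) form a subgroup of $A^E$ of order $t^{|E|-\rk(E)}$, so the count depends on $A$ only through $t$. For $X\subseteq E$, a flow that vanishes on every edge of $X$ is precisely a flow on the deletion $G\bs X$, and since deletion preserves the rank of a subset there are $t^{(|E|-|X|)-\rk(E-X)}$ of them. Inclusion--exclusion over the events ``the flow vanishes on edge $e$'' then gives
\[ Flow_G^1(t)=\sum_{X\subseteq E}(-1)^{|X|}\,t^{(|E|-|X|)-\rk(E-X)}. \]

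It remains to recognize this sum. For the characteristic polynomial of the dual, the dual rank formula $\rk^*(S)=|S|-(\rk(M)-\rk(E-S))$ from the introduction yields $\rk(M^*)=|E|-\rk(E)$ and, more to the point, $\rk(M^*)-\rk^*(X)=(|E|-|X|)-\rk(E-X)$ for every $X\subseteq E$; substituting this exponent identifies the sum verbatim with $\chi^1_{M^*}(t)=\chi^1_{M(G)^*}(t)$. For the Tutte evaluation, I would instead substitute $Y=E-X$ to rewrite the sum as $\sum_{Y\subseteq E}(-1)^{|E|-|Y|}t^{|Y|-\rk(Y)}$ and compare it with $(-1)^{|E|-\rk(E)}T^1_M(0;1-t)=(-1)^{|E|-\rk(E)}W^1_M(-1;-t)=\sum_{Y}(-1)^{|E|-\rk(E)}(-1)^{\rk(M)-\rk(Y)}(-t)^{|Y|-\rk(Y)}$; the accumulated power of $-1$ has exponent $|E|+|Y|-2\rk(Y)\equiv|E|-|Y|\pmod 2$, so the two sums agree term by term.

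The main obstacle I anticipate is sign bookkeeping rather than any hard computation: the inclusion--exclusion, the evaluation $W^1_M(-1;-t)$, and the dual rank formula each inject powers of $-1$, and it is the identity $\rk(M^*)=|E|-\rk(E)$ that forces them to cancel consistently (indeed this is also the mechanism behind the more formal route via Tutte duality $T^1_M(x;y)=T^1_{M^*}(y;x)$ and the $k=1$ case of Theorem \ref{tutteval}). The genuinely combinatorial inputs---that flows vanishing on $X$ are flows on $G\bs X$ and that all flows number $t^{|E|-\rk(E)}$---are classical, so the proof reduces to these standard facts plus careful tracking of the constants.
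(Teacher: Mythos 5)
The paper contains no proof of this theorem to compare against: it is cited to Tutte and used only as the base case of the induction establishing Theorem \ref{multiflow-thm}. Your argument is the standard classical one, and its combinatorial content is correct: the $A$-flows on any graph form a group of order $t^{|E|-\rk(E)}$, flows vanishing on all of $X\subseteq E$ are exactly flows on the deletion $G\bs X$, and inclusion--exclusion gives $Flow^1_G(t)=\sum_{X\subseteq E}(-1)^{|X|}t^{(|E|-|X|)-\rk(E-X)}$; your algebraic identifications of this sum are also both correct.

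What you should have flagged, however, is that the two identifications you prove, taken together, read
\[ Flow^1_G(t)\;=\;(-1)^{|E|-\rk(E)}\,T^1_{M(G)}(0;1-t)\;=\;\chi^1_{M(G)^*}(t), \]
with \emph{no} sign in front of the dual characteristic polynomial, whereas the printed statement carries the prefactor $(-1)^{|E|-\rk(E)}$ on $\chi^1_{M(G)^*}(t)$ as well. These differ whenever $|E|-\rk(E)$ is odd, and it is your version that is right: for the triangle $K_3$ one has $M(K_3)^*=U_{1,3}$, so $\chi^1_{U_{1,3}}(t)=t-3+3-1=t-1=Flow^1_{K_3}(t)$, while the printed right-hand side would give $1-t$. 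The same conclusion follows from the paper's own results: Theorem \ref{tutteval} applied to $M^*$ together with Proposition \ref{Tutte-dual} gives $\chi^1_{M^*}(t)=(-1)^{\rk(M^*)}T^1_{M^*}(1-t;0)=(-1)^{|E|-\rk(E)}T^1_{M}(0;1-t)$, and the $k=1$ case of the formula in Theorem \ref{multiflow-thm} likewise reduces to the unsigned identity. So the second equality of the statement contains a spurious sign; your proof establishes the corrected statement, and a careful write-up should say so explicitly rather than presenting the computation as a verification of the display as printed.
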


Note that this shows that $Flow^1_G(t)$ is independent of the edge directions and only depends on the size of the abelian group $|A|=t$. In section \ref{graps-sec} we define a generalized type of nowhere zero flow, defined using $k$ abelian groups $A_1, \ldots , A_k$ which we call coupled multicommodity flow, see definition \ref{def-coupled-multi}. Then we enumerate these flows in a polynomial which we denote by $Flow^k_G(A_1,\ldots ,A_k)$ and call this the coupled multicommodity flow polynomial. Next, we prove a similar result to Theorem \ref{multichrom} for the flow polynomial. The proof is similar but is more delicate due to the reordering of the variables.

\begin{theorem}\label{multiflow-thm}
Let $G=(V,E)$ be an undirected graph and $A_i$ be finite abelian groups with cardinalities $|A_i|=t_i$. Then \begin{align*}\mathrm{Flow}^k_{G}(A_1,\ldots ,A_k) &= (-1)^{k|E|}\sum\limits_{(B_i)\in \C_{E}^k}\prod\limits_{i=1}^k(-1)^{|B_i|}t_i^{|B_i|-\rk(B_i)} \\
& = (-1)^{k(|E|+\rk(G))} T^k_{M(G)}(0,\ldots , 0;1-t_1,\ldots , 1-t_k) .\end{align*}
    
\end{theorem}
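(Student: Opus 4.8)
The plan is to prove the two displayed equalities separately. The first (the defining count equals the alternating chain sum) I would obtain by an inclusion–exclusion that directly generalizes the classical argument behind Theorem \ref{flow1-poly-thm}; the second (the chain sum equals the Tutte evaluation) is a matter of unwinding the definitions of $W^k$ and $T^k$ in Definition \ref{Def-Tutte} and tracking signs. Since the second step is routine bookkeeping, the real content lies in the inclusion–exclusion.

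For the inclusion–exclusion I would begin from Definition \ref{def-coupled-multi}: a coupled multicommodity flow is a tuple $(f_1,\dots,f_k)$ of $A_i$-flows on $G$ subject to its coupling condition on the edgewise pattern of vanishing. The two facts I would use are that the number of $A_i$-flows vanishing on a prescribed set $Z_i\subseteq E$ equals the number of $A_i$-flows on the deletion $(V,E-Z_i)$, namely $t_i^{\,|E-Z_i|-\rk(E-Z_i)}$ (the size of its cycle space), and that the coupling condition factors over edges, its indicator at an edge $e$ being $[\,q_e\text{ even}\,]$ for the local vanishing statistic $q_e$ (the length of the initial block of commodities $f_1,\dots,f_{q_e}$ that vanish at $e$). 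Replacing $[\,q_e\text{ even}\,]$ by $\sum_{m=0}^{q_e}(-1)^m$ and interchanging the orders of summation assembles the per-edge choices into a descending chain of forced-zero sets, giving
\[ \mathrm{Flow}^k_{G}(A_1,\dots,A_k)=\sum_{Z_1\supseteq\cdots\supseteq Z_k}\ \prod_{i=1}^k(-1)^{|Z_i|}\,t_i^{\,|E-Z_i|-\rk(E-Z_i)} . \]
Substituting $B_i=E-Z_i$ reverses the chain to $B_1\subseteq\cdots\subseteq B_k$, rewrites the exponent as the nullity $|B_i|-\rk(B_i)$, and turns each $(-1)^{|Z_i|}$ into $(-1)^{|E|}(-1)^{|B_i|}$, which produces the prefactor $(-1)^{k|E|}$ and the middle expression of the theorem.

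For the second equality, Definition \ref{Def-Tutte} gives $T^k_{M(G)}(0,\dots,0;1-t_1,\dots,1-t_k)=W^k_{M(G)}(-1,\dots,-1;-t_1,\dots,-t_k)$, and in the summand indexed by a chain $(S_i)$ the sign is $\prod_{i=1}^k(-1)^{\rk(G)-\rk(S_i)}(-1)^{|S_i|-\rk(S_i)}=(-1)^{k\,\rk(G)}\prod_{i=1}^k(-1)^{|S_i|}$, using $\rk(M(G))=\rk(G)$ and that $-2\rk(S_i)$ is even. Multiplying by the stated prefactor $(-1)^{k(|E|+\rk(G))}$ and applying $(-1)^{2k\,\rk(G)}=1$ collapses the signs to $(-1)^{k|E|}$, matching the middle expression term by term.

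The main obstacle I anticipate is precisely the bookkeeping of the first step, which is the ``reordering'' delicacy flagged before the statement. Unlike the coloring case of Theorem \ref{multichrom}, where the monochromatic-forced sets feed directly into the component count $t^{c(\cdot)}$, here the coupling condition of Definition \ref{def-coupled-multi} constrains the zero sets $Z_i$, whereas the Tutte polynomial registers the complementary support sets $B_i=E-Z_i$; the complementation flips the chain orientation and injects the sign $(-1)^{k|E|}$, and one must verify that the per-edge parity identity genuinely reproduces the coupling condition with the indices $1,\dots,k$ assigned to the chain in the order that matches the variables $t_1,\dots,t_k$. Once that correspondence is pinned down, the remaining manipulations—both the interchange of summation and the sign computation of the final step—are mechanical.
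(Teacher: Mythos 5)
Your proof is correct, but it takes a genuinely different route from the paper's. The paper proves Theorem \ref{multiflow-thm} by induction on $k$: it defines the sets $Q_e^m$ recursively, uses the product decomposition $\bigcap_{e\in S_1} Q_e^k=\bigcap_{e\in S_1} Q_e^1 \times \bigcap_{e\in S_1}\overline{Q_e^{k-1}}$, applies inclusion--exclusion over $S_1\subseteq E$, invokes the inductive hypothesis on the inner sum, and only at the end complements to reverse the chain; its base case is Theorem \ref{flow1-poly-thm}. You instead collapse all $k$ levels at once: you characterize the coupling condition at an edge as ``the initial run of vanishing commodities has even length,'' expand each indicator of this event as $\sum_{m=0}^{q_e}(-1)^m$, and interchange summations so that the tuples $(m_e)_{e\in E}$ assemble directly into descending chains $Z_1\supseteq\cdots\supseteq Z_k$ of forced-zero sets. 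This buys three things: no induction (your argument reproves the $k=1$ case rather than citing it), a transparent pairing of $t_i$ with $Z_i$ so that the reversed-order bookkeeping the paper flags as delicate is confined to one global complementation step, and an explicit combinatorial reading of Definition \ref{couple-flow-def} that is only implicit in the paper. The one point that genuinely requires proof in your write-up---and which you correctly flag---is the equivalence between the implications of Definition \ref{couple-flow-def}, whose hypotheses involve only odd-indexed commodities, and the even-run condition; this does check out (if the initial run ends at an even position $q$, then $f_{q+1}(e)\neq 0$ with $q+1$ odd, so every later implication has a false hypothesis, while an odd run length violates the implication or, in the all-vanishing odd-$k$ case, the final nonvanishing clause), but it should be stated and verified as a lemma rather than asserted. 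Your direct verification of the second equality from Definition \ref{Def-Tutte}, giving $T^k_{M(G)}(0,\ldots,0;1-t_1,\ldots,1-t_k)=(-1)^{k\rk(G)}\sum_{(S_i)\in\C_E^k}\prod_{i=1}^k(-1)^{|S_i|}t_i^{|S_i|-\rk(S_i)}$, is also correct and somewhat more self-contained than the paper's appeal to Theorem \ref{tutteval}.
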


Again, we note that these flows are independent of the directions on the graph (as this theorem is stated for undirected graphs) and that these flows only depend on the size of the abelian groups. The ordering of the variables in Theorem \ref{multichrom} versus the ordering in Theorem \ref{multiflow-thm} is directly tied to the definitions \ref{def-coupled-multi} and \ref{couple-flow-def}. It looks as though maybe coupled multicommodity flows are more natural than coupled multicoloring, but in the definitions we could have just reordered the implications and that would switch the variable orderings. We also note that the definitions we present are in a sense dual. This duality can be viewed through the definitions themselves or through Theorems \ref{multichrom} and \ref{multiflow-thm} together with the dual formula in proposition \ref{Tutte-dual}.

We split the remainder of this note into three sections. First, in section \ref{Tutte-sec}, we examine chain characteristic polynomials for matroids in general from the perspective of chain Tutte polynomials. Then, in section \ref{graps-sec}, we study the chain characteristic polynomials entirely in terms of graphs and develop generalized coloring and flow. We finish this note with a section on problems which are natural extensions of results on the classic characteristic polynomials.

\

\begin{flushleft}
{\bf Acknowledgments:} We would like to thank Franklin Kenter for many discussions on the graph theory portions of this work. His comments helped significantly with the proof of Theorem \ref{flow1-poly-thm}. We also want to thank Will Traves for many helpful discussions and comments on various portions of this project.
\end{flushleft}

\section{Chain characteristic polynomials for matroids}\label{Tutte-sec}

In this section, we recall some results on chain Tutte polynomials contained in \cite{Wak-23} to deduce results on chain characteristic polynomials. First, we note that the higher chain Tutte polynomials determine the lower chain Tutte polynomials.

\begin{lemma}[Lemma 3.1 \cite{Wak-23}]\label{getT1}

For any matroid $M$ and any $k\geq 1$ $$T^{k+1}_M(2,2x_1-1,x_2,\dots ,x_k;2,2^{-1}y_1+2^{-1},y_2,\dots ,y_k)=2^{\rk (M)}T^k_M(x_1,\dots ,x_k;y_1,\dots ,y_k).$$

\end{lemma}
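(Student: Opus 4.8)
The plan is to unwind both sides through the definition $T^k_M((x_i)_1^k;(y_i)_1^k)=W^k_M((x_i-1)_1^k;(y_i-1)_1^k)$ from Definition \ref{Def-Tutte}, and to observe that the particular substitution is engineered so that the extra (leftmost) link of the longer chain becomes a free inner summation whose total weight collapses to a single power of $2$. First I would translate the left-hand side into a chain Whitney rank generating polynomial. Subtracting $1$ from each displayed argument, the $(k+1)$-tuples of parameters become $a_1=1$, $a_2=2(x_1-1)$, and $a_{i+1}=x_i-1$ for $2\le i\le k$, together with $b_1=1$, $b_2=2^{-1}(y_1-1)$, and $b_{i+1}=y_i-1$ for $2\le i\le k$. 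Hence the left-hand side equals
\[ \sum_{(S_1\subseteq\cdots\subseteq S_{k+1})\in \C^{k+1}_\A} a_1^{\rk(M)-\rk(S_1)}b_1^{|S_1|-\rk(S_1)}\prod_{i=2}^{k+1} a_i^{\rk(M)-\rk(S_i)}b_i^{|S_i|-\rk(S_i)}. \]

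Since $a_1=b_1=1$, the entire $S_1$-factor is $1$, so the summand does not depend on $S_1$ at all; its only role is the constraint $S_1\subseteq S_2$. I would therefore carry out the inner sum over $S_1$ first: for each fixed tail $S_2\subseteq\cdots\subseteq S_{k+1}$ there are exactly $2^{|S_2|}$ admissible subsets $S_1\subseteq S_2$. Relabelling $T_i:=S_{i+1}$ (so $(T_1\subseteq\cdots\subseteq T_k)\in\C^k_\A$) turns the expression into
\[ \sum_{(T_i)\in\C^k_\A} 2^{|T_1|}\,[2(x_1-1)]^{\rk(M)-\rk(T_1)}[2^{-1}(y_1-1)]^{|T_1|-\rk(T_1)}\prod_{i=2}^{k}(x_i-1)^{\rk(M)-\rk(T_i)}(y_i-1)^{|T_i|-\rk(T_i)}. \]

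The crux is then a bookkeeping of the powers of $2$ attached to the $T_1$-term: the three sources $2^{|T_1|}$ (from the count of $S_1$), $2^{\rk(M)-\rk(T_1)}$ (from $a_2$), and $2^{-(|T_1|-\rk(T_1))}$ (from $b_2$) combine to
\[ 2^{\,|T_1|+(\rk(M)-\rk(T_1))-(|T_1|-\rk(T_1))}=2^{\rk(M)}, \]
which is independent of $T_1$. Factoring this constant out of the sum leaves precisely $W^k_M((x_i-1)_1^k;(y_i-1)_1^k)=T^k_M((x_i)_1^k;(y_i)_1^k)$, which yields the claimed identity. The one genuinely nonroutine point is recognizing that the substitution was chosen exactly to force this cancellation; the remaining subtlety, which I would note in passing, is that all exponents here are nonnegative integers (as $\rk(T_1)\le|T_1|$ and $\rk(T_1)\le\rk(M)$), so $2^{|T_1|}$ really counts the subsets $S_1$ and the manipulation is a valid polynomial identity in the $x_i,y_i$.
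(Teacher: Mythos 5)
Your proof is correct: passing to $W^{k+1}_M$ via the substitution $a_1=b_1=1$, $a_2=2(x_1-1)$, $b_2=2^{-1}(y_1-1)$, summing out the free innermost set $S_1\subseteq S_2$ (which contributes $2^{|S_2|}$), and collapsing the three powers of $2$ into the constant $2^{\rk(M)}$ is exactly the right computation, and your closing remark that all exponents are nonnegative integers (so the factoring of $[2(x_1-1)]^{\rk(M)-\rk(T_1)}$ and $[2^{-1}(y_1-1)]^{|T_1|-\rk(T_1)}$ is legitimate) addresses the one point that needs justification. Note that the present paper gives no proof of this lemma at all---it is imported verbatim as Lemma 3.1 of \cite{Wak-23}---so your argument serves as a valid self-contained substitute for the citation, and it proceeds along the same natural route as the proof in that reference.
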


Next, we note that the chain Tutte polynomial of the dual matroid is the same as a change of variables of the chain Tutte polynomial of the matroid.

\begin{proposition}[Proposition 3.4 \cite{Wak-23}]\label{Tutte-dual}

If $M$ is a matroid then $$T^k_{M^*}(x_1,\dots ,x_k;y_1,\dots , y_k)=T^k_M(y_k,\dots ,y_1;x_k,\dots ,x_1).$$

\end{proposition}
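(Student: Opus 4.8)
The plan is to prove Proposition~\ref{Tutte-dual} directly from the definition of the chain Tutte polynomial by tracking how matroid duality acts on the two statistics appearing in the chain Whitney rank generating polynomial. Since $T^k_M((x_i);(y_i))=W^k_M((x_i-1);(y_i-1))$, it suffices to establish the corresponding identity at the level of $W^k_M$, namely
\begin{equation*}
W^k_{M^*}((x_i-1)_1^k;(y_i-1)_1^k)=W^k_{M}((y_{k-i+1}-1)_1^k;(x_{k-i+1}-1)_1^k),
\end{equation*}
which is exactly the claimed change of variables after unwinding the substitution $x\mapsto x-1$, $y\mapsto y-1$. So really I would work with $W^k$ throughout and only translate back to $T^k$ at the very end.

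First I would recall the two numerical invariants of the dual. For any $S\subseteq\A$, the dual rank satisfies $\rk^*(S)=|S|-(\rk(M)-\rk(\A-S))$, and from this one extracts the two standard duality relations for the corank and nullity statistics: the dual corank $\rk(M^*)-\rk^*(S)$ equals the nullity $|\A-S|-\rk(\A-S)$ of the complement, while the dual nullity $|S|-\rk^*(S)$ equals the corank $\rk(M)-\rk(\A-S)$ of the complement. (Here I use $\rk(M^*)=|\A|-\rk(M)$.) Thus complementation $S\mapsto \A-S$ interchanges the roles of corank and nullity when passing between $M$ and $M^*$; this is the crux of classical Tutte duality $T_M(x,y)=T_{M^*}(y,x)$, and the chain version will inherit it.

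The key step is to choose the right bijection on chains. A chain $(S_1\subseteq\cdots\subseteq S_k)\in\C^k_\A$ should be sent to the chain of complements, taken in reverse order, $(\A-S_k\subseteq\A-S_{k-1}\subseteq\cdots\subseteq\A-S_1)$; this reversal is precisely why the variable indices get reversed in the statement, and it lands us back in $\C^k_\A$ (now viewed as chains for $M^*$, whose ground set is also $\A$). I would then substitute this bijection into the definition of $W^k_{M^*}((x_i-1);(y_i-1))$, apply the two duality relations from the previous paragraph termwise, and observe that the $i$-th factor for $M^*$ becomes the $(k-i+1)$-th factor for $M$ with the $a$- and $b$-variables swapped. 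Summing over the (bijective) reindexed chains yields $W^k_M$ evaluated with the reversed, swapped variables, finishing the proof.

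The main obstacle, and the only place that requires genuine care rather than routine bookkeeping, is getting the reversal and the index-shift $i\mapsto k-i+1$ to line up consistently across all three places they appear: in the reindexing of the chain, in the pairing of corank-to-nullity factors, and in the final matching of variable subscripts. In particular I must verify that applying the bijection does not merely send $W^k_{M^*}$ to something with both the variables reversed \emph{and} the $a/b$ roles swapped in a mismatched order; checking the base pattern on a single factor (say tracing $a_i^{\rk(M^*)-\rk^*(S_i)}b_i^{|S_i|-\rk^*(S_i)}$ through the substitution) and then confirming it propagates correctly under the product and the reindexing will be the heart of the argument. Everything else is an instance of the fact that $\C^k_\A$ is closed under reverse-complementation and that the two duality identities are exact.
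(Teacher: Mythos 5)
Your proposal is correct: the reverse-complement bijection $(S_1\subseteq\cdots\subseteq S_k)\mapsto(\A-S_k\subseteq\cdots\subseteq\A-S_1)$ together with the two duality identities $\rk(M^*)-\rk^*(S)=|\A-S|-\rk(\A-S)$ and $|S|-\rk^*(S)=\rk(M)-\rk(\A-S)$ does yield $W^k_{M^*}((a_i)_1^k;(b_i)_1^k)=W^k_M((b_{k-i+1})_1^k;(a_{k-i+1})_1^k)$, which is exactly the stated identity after the shift $x\mapsto x-1$, $y\mapsto y-1$. This paper imports the proposition from \cite{Wak-23} without reproducing its proof, and your argument is the natural chain-level generalization of classical Tutte duality that the cited reference employs, so it is essentially the same approach.
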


The next fact is that chain Tutte polynomials are multiplicative over direct sums of matroids. Recall that if $M_1=(\A_1,\rk_1)$ and $M_2=(\A_2,\rk_2)$ are matroids then $M_1\oplus M_2=(\A_1 \sqcup \A_2,\rk_\oplus)$ where $\rk_\oplus (A)=\rk_1(A\bigcap \A_1)+\rk(A\bigcap \A_2)$. 

\begin{proposition}[Proposition 3.5 \cite{Wak-23}]\label{Tprod}

If $M_1=(\A_1,\rk_1)$ and $M_2=(\A_2,\rk_2)$ are matroids then $$T^k_{M_1\oplus M_2}((x_i);(y_i))=T^k_{M_1}((x_i);(y_i))T^k_{M_2}((x_i);(y_i)).$$

\end{proposition}

Again, following \cite{Wak-23} we have a recursion for chain Tutte polynomials. This is given by the \emph{split chain Tutte polynomials} from \cite{Wak-23} which for $0<j<k$ are defined by \begin{align*} sT^{k,j}_{M,a}((x_i),(y_i)) &=\sum\limits_{(S_i)_{1}^{j} \in \C^{j}_{\A -a} } \prod\limits_{i=1}^{j}(x_{i}-1)^{\rk(M\bs a)-\rk_{M\bs a}(S_i)} (y_{i}-1)^{|S_i|-\rk_{M\bs a}(S_i)}\\
&\cdot \sum\limits_{\substack{(S_i)_{j+1}^{k} \in \C^{k-j}_{\A -a}\\ S_j\subseteq S_{j+1}} } \prod\limits_{i=j+1}^{k}(x_{i}-1)^{\rk(M/ a)-\rk_{M/ a}(S_i)} (y_{i}-1)^{|S_i|-\rk_{M/ a}(S_i)}\\
&=\sum\limits_{S_1\subseteq \A-a}g(j,a,S_1)T^j_{M\mid_{S_1}}((x_i)_1^j;(y_i)_1^j)T^{k-j-1}_{M/(S_1\cup a)}((x_i)_{2+j}^k;(y_i)_{2+j}^k)\end{align*} where \begin{equation*}g(j,a,S_1)=\left[ \prod\limits_{i=1}^j (x_i-1)^{\rk (M/S_1)}\right] (x_{j+1}-1)^{\rk(M/a)-\rk_{M/a}(S_1)}\left[ \prod\limits_{i=1}^{k-j} (y_{i+j}-1)^{|S_1|-\rk_{M/a}(S_1)}\right] .\end{equation*} Then for $j=0$ the split chain Tutte polynomial  $sT_{M,a}^{k,0}=T^k_{M/a}$ and for $j=k$ it is $sT_{M,a}^{k,k}=T^k_{M\backslash a}$.

Then the recursion with these split chain Tutte polynomials is given by the following.

\begin{theorem}[Theorem 1.3 \cite{Wak-23}]\label{recursionTutte}

For a matroid $M=(\A,\I)$ and $a\in \A$ that is not a loop and not a coloop \[T^k_M=\sum\limits_{j=0}^k sT^{k,j}_{M,a}.\]

\end{theorem}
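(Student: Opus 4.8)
The plan is to expand $T^k_M$ directly from its definition as a sum over chains $(S_i)_1^k\in\C^k_\A$ and to partition that sum by recording where the distinguished element $a$ first appears. Since any chain $S_1\subseteq\cdots\subseteq S_k$ is nested, the index set $\{i\mid a\in S_i\}$ is upward closed in $[k]$; hence there is a unique threshold $j\in\{0,1,\dots,k\}$ with $a\notin S_i$ for $i\le j$ and $a\in S_i$ for $i>j$. First I would group the chains in $\C^k_\A$ by this threshold, writing $T^k_M=\sum_{j=0}^k T^{k,(j)}_M$, where $T^{k,(j)}_M$ collects exactly the chains of threshold $j$. The goal is then to identify each summand $T^{k,(j)}_M$ with $sT^{k,j}_{M,a}$.

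The second step is to decompose a threshold-$j$ chain into its deletion and contraction pieces. For $i\le j$ we have $S_i\subseteq\A-a$, so $(S_1,\dots,S_j)$ is a chain in $\C^j_{\A-a}$. For $i>j$ we have $a\in S_i$, and writing $S_i=S_i'\cup\{a\}$ with $S_i'\subseteq\A-a$ produces a chain $(S_{j+1}',\dots,S_k')\in\C^{k-j}_{\A-a}$. Crucially, the nesting $S_j\subseteq S_{j+1}$ together with $a\notin S_j$ is equivalent to the coupling condition $S_j\subseteq S_{j+1}'$, which is precisely the constraint in the definition of $sT^{k,j}_{M,a}$. Thus the chains of threshold $j$ are in bijection with the pairs summed over in $sT^{k,j}_{M,a}$.

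The third step, and the place where the hypotheses on $a$ enter, is to rewrite the exponents in terms of the deletion and contraction matroids. For the lower factors ($i\le j$), because $a$ is not a coloop we have $\rk(M)=\rk(M\bs a)$, and since $S_i\subseteq\A-a$ the rank $\rk(S_i)$ equals $\rk_{M\bs a}(S_i)$; hence $\rk(M)-\rk(S_i)=\rk(M\bs a)-\rk_{M\bs a}(S_i)$ and $|S_i|-\rk(S_i)=|S_i|-\rk_{M\bs a}(S_i)$. For the upper factors ($i>j$), because $a$ is not a loop we have $\rk(a)=1$, giving $\rk(M/a)=\rk(M)-1$ and $\rk_{M/a}(S_i')=\rk(S_i'\cup a)-1=\rk(S_i)-1$; substituting these shows $\rk(M)-\rk(S_i)=\rk(M/a)-\rk_{M/a}(S_i')$ and, since $|S_i|=|S_i'|+1$, that $|S_i|-\rk(S_i)=|S_i'|-\rk_{M/a}(S_i')$. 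After these substitutions the product over a threshold-$j$ chain factors exactly as the product defining $sT^{k,j}_{M,a}$, so $T^{k,(j)}_M=sT^{k,j}_{M,a}$ for $0<j<k$.

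Finally I would check the two boundary thresholds. When $j=k$ no $S_i$ contains $a$, the chain lies entirely in $\A-a$, and the lower-factor computation gives $T^{k,(k)}_M=T^k_{M\bs a}=sT^{k,k}_{M,a}$; when $j=0$ every $S_i$ contains $a$, the tuple $(S_i')$ ranges over all of $\C^k_{\A-a}$ with no coupling constraint, and the upper-factor computation gives $T^{k,(0)}_M=T^k_{M/a}=sT^{k,0}_{M,a}$. Summing over all $j$ then yields $T^k_M=\sum_{j=0}^k sT^{k,j}_{M,a}$. The main obstacle is purely bookkeeping: keeping the coupling constraint $S_j\subseteq S_{j+1}'$ correctly aligned across the split, and verifying that the loop and coloop hypotheses are exactly what make the two exponent identities hold. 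No deeper combinatorial input is required.
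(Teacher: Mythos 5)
Your proof is correct: the partition of $\C^k_\A$ by the threshold index $j$ where $a$ first enters the chain, the bijection with coupled pairs of chains in $\A-a$, and the two exponent identities (using ``not a coloop'' for $\rk(M)=\rk(M\bs a)$ on the deletion factors and ``not a loop'' for $\rk_{M/a}(S_i')=\rk(S_i)-1$ on the contraction factors) all check out, including the boundary cases $j=0$ and $j=k$. Note that this paper does not actually prove the statement --- it imports it as Theorem 1.3 of \cite{Wak-23} --- so there is no in-paper proof to compare against; your argument is the natural generalization of the classic $k=1$ deletion--contraction split of the Whitney rank generating polynomial, and is essentially the argument given in the cited source.
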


Theorem \ref{recursionTutte} gives a recursion, albeit rather complicated, for the chain characteristic polynomials. 

\begin{corollary}

For a matroid $M=(\A,\I)$ and $a\in \A$ that is not a loop and not a coloop

\[\chi^k_M(t_1,\ldots ,t_k) = \chi^k_{M\bs a}(t_1,\ldots ,t_k)+(-1)^{k} \chi^k_{M/a}(t_1,\ldots ,t_k)\]
\[+\sum\limits_{j=1}^{k-1} \sum\limits_{S_1\subseteq \A-a} (-1)^{s(j,a,S_1)}p(j,a,S_1) \chi^{j}_{M\mid_{S_1}}(t_1,\ldots ,t_j) \chi^{k-j-1}_{M/(S_1\cup a)}(t_{2+j},\ldots ,t_k)   \]

where

\[ p(j,a,S_1)= t_{j+1}^{\rk(M/a)-\rk_{M/a}(S_1)}\prod\limits_{i=1}^j t_i^{\rk (M/S_1)}\]

and 

 \begin{align*} s(j,a,S_1)&= \rk(M/a)+(k-j)|S_1|+j(\rk(M/S_1)+\rk(M|_{S_1}))\\
 & +(k-j-1)\rk(M/(S_1\cup a))-(k-j+1)\rk_{M/a}(S_1) .\end{align*} 
\end{corollary}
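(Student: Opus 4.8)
The identity is exactly the recursion of Theorem~\ref{recursionTutte} read through the specialization of Theorem~\ref{tutteval}. The plan is to start from
\[
\chi^k_M(t_1,\dots,t_k)=(-1)^{k\rk(M)}\,T^k_M\big((1-t_i)_1^k;(0)_1^k\big),
\]
to expand the right-hand side via $T^k_M=\sum_{j=0}^k sT^{k,j}_{M,a}$, and to evaluate each summand at $x_i=1-t_i$, $y_i=0$, so that every factor $x_i-1$ becomes $-t_i$ and every factor $y_i-1$ becomes $-1$. The entire argument is then a reorganization of these $k+1$ specialized split polynomials into the three advertised blocks, and the only real work is tracking signs.

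First I would dispose of the two boundary indices. For $j=0$ we have $sT^{k,0}_{M,a}=T^k_{M/a}$, and applying Theorem~\ref{tutteval} to the matroid $M/a$ gives $T^k_{M/a}((1-t_i);(0))=(-1)^{k\rk(M/a)}\chi^k_{M/a}$; since $a$ is not a loop, $\rk(M/a)=\rk(M)-1$, so the global prefactor $(-1)^{k\rk(M)}$ collapses this term to $(-1)^k\chi^k_{M/a}$. Symmetrically, for $j=k$ we have $sT^{k,k}_{M,a}=T^k_{M\bs a}$, and because $a$ is not a coloop, $\rk(M\bs a)=\rk(M)$, so this term reduces to exactly $\chi^k_{M\bs a}$. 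These two computations produce the first two summands of the claimed recursion.

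The substance is the middle range $0<j<k$, where I would use the factored form of $sT^{k,j}_{M,a}$ as a sum over $S_1\subseteq\A-a$ of $g(j,a,S_1)$ times $T^j_{M|_{S_1}}((x_i)_1^j;(y_i)_1^j)$ times $T^{k-j-1}_{M/(S_1\cup a)}((x_i)_{2+j}^k;(y_i)_{2+j}^k)$. Evaluating $g$ at the substitution turns $\prod_{i=1}^j(x_i-1)^{\rk(M/S_1)}$ into $(-1)^{j\rk(M/S_1)}\prod_{i=1}^j t_i^{\rk(M/S_1)}$, the factor $(x_{j+1}-1)^{\rk(M/a)-\rk_{M/a}(S_1)}$ into $(-1)^{\rk(M/a)-\rk_{M/a}(S_1)}t_{j+1}^{\rk(M/a)-\rk_{M/a}(S_1)}$, and the $y$-block into $(-1)^{(k-j)(|S_1|-\rk_{M/a}(S_1))}$; the surviving monomial is precisely $p(j,a,S_1)$. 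At the same time, Theorem~\ref{tutteval} applied to the two remaining Tutte factors rewrites them as $(-1)^{j\rk(M|_{S_1})}\chi^j_{M|_{S_1}}(t_1,\dots,t_j)$ and $(-1)^{(k-j-1)\rk(M/(S_1\cup a))}\chi^{k-j-1}_{M/(S_1\cup a)}(t_{2+j},\dots,t_k)$, matching the two characteristic-polynomial factors in the statement.

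The final and most delicate step is to assemble these sign contributions together with the global $(-1)^{k\rk(M)}$ and verify that the total parity is $s(j,a,S_1)$; collecting the $\rk_{M/a}(S_1)$ contributions (one from $x_{j+1}$ and $k-j$ from the $y$-block) into the coefficient $-(k-j+1)$, and pairing $j\rk(M/S_1)$ with $j\rk(M|_{S_1})$, reproduces the defining expression for $s$ term by term. I expect this parity accounting to be the main obstacle: unlike the boundary cases, where a single factor of index $k$ combines cleanly with the prefactor, the two middle factors carry indices $j$ and $k-j-1$ summing to $k-1$, so the global $(-1)^{k\rk(M)}$ does not cancel automatically and the bookkeeping must be done with care. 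To carry it out I would keep the rank identities $\rk(M/S_1)=\rk(M)-\rk(M|_{S_1})$ and $\rk_{M/a}(S_1)=\rk(S_1\cup a)-1$ at hand, so that every exponent can be reduced to the common variables appearing in $s(j,a,S_1)$ before comparing parities.
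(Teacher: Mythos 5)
Your strategy --- substituting the evaluation of Theorem \ref{tutteval} into the recursion of Theorem \ref{recursionTutte} and using the factored form of $sT^{k,j}_{M,a}$ --- is exactly the intended derivation, and your treatment of the boundary indices $j=0$ and $j=k$ is correct. The gap is precisely the step you defer and then assert: that the collected parity equals $s(j,a,S_1)$. It does not. Adding up exactly the exponents you list --- $k\rk(M)$ from the global prefactor, $j\rk(M/S_1)+\bigl(\rk(M/a)-\rk_{M/a}(S_1)\bigr)+(k-j)\bigl(|S_1|-\rk_{M/a}(S_1)\bigr)$ from $g$, and $j\rk(M|_{S_1})+(k-j-1)\rk(M/(S_1\cup a))$ from the two Tutte factors --- yields, modulo $2$, every term of $s(j,a,S_1)$ \emph{plus} a leftover $k\rk(M)$, and nothing cancels it: since $\rk(M/S_1)+\rk(M|_{S_1})=\rk(M)$, your terms give $(k+j)\rk(M)$ where $s$ contains only $j\rk(M)$. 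Unlike the boundary cases, where the prefactor combines with $(-1)^{k\rk(M\bs a)}$ or $(-1)^{k\rk(M/a)}$, in the middle range it survives intact, so what your computation actually proves is the identity with sign exponent $s(j,a,S_1)+k\rk(M)$.

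This is not a bookkeeping slip you can rearrange away: the statement as printed fails whenever $k\rk(M)$ is odd, so no correct argument can end where you claim yours does. Take $j=k-1$ and $S_1=\emptyset$: then $s(k-1,a,\emptyset)=(\rk(M)-1)+(k-1)\rk(M)=k\rk(M)-1$, so the stated formula assigns the monomial $(t_1\cdots t_{k-1})^{\rk(M)}t_k^{\rk(M)-1}$ the coefficient $(-1)^{k\rk(M)-1}$, whereas the derivation gives $(-1)^{(k\rk(M)-1)+k\rk(M)}=-1$. Now compare coefficients of that monomial for a simple matroid of rank $r$ on $n$ elements: it has coefficient $-n$ in $\chi^k_M$ and $-(n-1)$ in $\chi^k_{M\bs a}$, the contraction term cannot reach $t_1$-degree $r$, and $(j,S_1)=(k-1,\emptyset)$ is the only middle term that can reach the monomial, so that term must contribute $-1$. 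With $M=M(K_4)$ and $k=3$ (so $k\rk(M)=9$ is odd), the printed formula gives $-5+1=-4$ against the true value $-6$, while the corrected sign gives $-5-1=-6$. So you should either prove the corrected identity (replace $j(\rk(M/S_1)+\rk(M|_{S_1}))$ by $(k+j)(\rk(M/S_1)+\rk(M|_{S_1}))$ in $s$, equivalently add $k\rk(M)$ to it), or restrict to $k\rk(M)$ even; you cannot, as asserted, reproduce ``the defining expression for $s$ term by term.''
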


Now we examine the coefficients of chain characteristic polynomials. We use these coefficients to define a generalized M\"obius function and show that these coefficients alternate. Recall that given a matroid the lattice of flats is $L(M)$ where we denote join of a set $S\subseteq \A$ by $\vee S=F$ if $F$ is the minimal flat that contains all the elements of $S$.

\begin{definition}
    Let $M=(\A, \rk)$ be a matroid, $L(M)$ its lattice of flats, and \[\Fl^k_M=\left\{ (X_1,\dots ,X_k)\in L(M)^k|\ X_i\subseteq X_{i+1}\right\} .\]
We define the $k^{th}$-chain M\"obius function by \[ \mu^k : \Fl^k_M \to \ZZ\] by \[\mu^k(X_1,\dots , X_k)=\sum\limits_{\substack{(A_i)\in \C_{\A}^k\\ \vee A_i=X_i}}\prod\limits_{i=1}^k(-1)^{|A_i|} .\]
\end{definition}

If we denote by $\mu$ the classic M\"obious function, then $\mu^1(X)=\mu (\hat{0},X)$ and $\mu^2=\mu$ as a consequence of \cite[Lemma 2.2]{Wak-23}.

By reordering the coefficients of the chain Tutte polynomial, we get the following M\"obius function decomposition of the chain characteristic polynomials.

\begin{proposition}
   For any matroid $M$ \[\chi_M^k(t_1,\dots ,t_k)= \sum\limits_{(X_i)\in \Fl^k_M}\hspace{-.4cm}\mu^k(X_1,\dots ,X_k)\prod\limits_{i=1}^kt_i^{\rk(M)-\rk(X_i)} . \]
\end{proposition}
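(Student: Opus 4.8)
The plan is to prove this by regrouping the defining sum of $\chi_M^k$ according to the sequence of closures $(\vee A_i)$ attached to each chain $(A_i) \in \C^k_\A$. The entire argument rests on two elementary properties of the matroid closure (equivalently, join) operator, and once those are in hand the identity follows by collecting terms.

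First I would record the two facts I need. \textbf{Rank invariance under closure}: for any $A \subseteq \A$ one has $\rk(A) = \rk(\vee A)$, since $\vee A$ is by definition the minimal flat containing $A$ and passing to the closure does not change the rank. \textbf{Monotonicity of closure}: if $A \subseteq B$ then $\vee A \subseteq \vee B$. Consequently, if $(A_1, \dots, A_k) \in \C^k_\A$ is a chain, then its sequence of closures satisfies $\vee A_1 \subseteq \cdots \subseteq \vee A_k$ with each $\vee A_i \in L(M)$, so $(\vee A_1, \dots, \vee A_k) \in \Fl^k_M$. This shows that $(A_i) \mapsto (\vee A_i)$ defines a map $\C^k_\A \to \Fl^k_M$, and in particular that no chain of subsets has its closures lying outside $\Fl^k_M$.

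Next I would partition $\C^k_\A$ into the fibers of this map and rewrite the defining sum as
\[
\chi_M^k(t_1,\dots,t_k)=\sum_{(X_i)\in \Fl^k_M}\ \sum_{\substack{(A_i)\in \C^k_\A\\ \vee A_i = X_i}}\ \prod_{i=1}^k(-1)^{|A_i|}t_i^{\rk(M)-\rk(A_i)}.
\]
Within each inner sum the closures are fixed at $\vee A_i = X_i$, so by rank invariance the exponent $\rk(M)-\rk(A_i)=\rk(M)-\rk(X_i)$ is constant across the inner sum. I would therefore pull the monomial $\prod_{i=1}^k t_i^{\rk(M)-\rk(X_i)}$ out of the inner sum, leaving exactly $\sum_{(A_i):\,\vee A_i = X_i}\prod_{i=1}^k(-1)^{|A_i|}=\mu^k(X_1,\dots,X_k)$ by the definition of the chain M\"obius function. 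This produces the claimed formula directly.

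The argument has no genuine obstacle: the only points requiring care are confirming that the closures of a chain again form a chain of flats (so that $\Fl^k_M$ is precisely the correct outer index set, with empty fibers over tuples outside $\Fl^k_M$ contributing nothing) and that rank is insensitive to taking closures (so that the monomial factors out cleanly). Both are immediate from the standard properties of the matroid closure operator, so the proof is essentially a reindexing of the sum defining $\chi_M^k$.
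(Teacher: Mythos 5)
Your proof is correct and is essentially the argument the paper intends: the paper states this proposition without a written proof, remarking only that it follows ``by reordering the coefficients,'' and your fiber-wise regrouping of the defining sum of $\chi_M^k$ over the closure map $(A_i)\mapsto (\vee A_i)$, using that closure preserves rank and is monotone, is exactly that reordering made explicit. No gaps; the inner sums match the definition of $\mu^k$ verbatim.
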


Next we look at the sign of the chain M\"obius function. 

\begin{theorem}\label{thm-sign}

For any $(X_1,\dots ,X_k)\in \Fl_M^k$ where $M$ is a simple matroid \[\mathrm{sgn}(\mu^k(X_1,\dots ,X_k))=\prod\limits_{i=1}^k(-1)^{\rk(X_i)} .\]
    
\end{theorem}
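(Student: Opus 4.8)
The plan is to reduce $\mu^k$ to a product of \emph{classical} interval Möbius values on $L(M)$ by a recursion, and then read off the sign from Rota's sign theorem for geometric lattices.

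First I would peel off the outermost summation variable. Fixing a chain $A_1\subseteq\cdots\subseteq A_{k-1}$ with $\vee A_i=X_i$ and summing over the top set $A_k$, note that $A_k\subseteq \vee A_k=X_k$ is automatic, so writing $A_k=A_{k-1}\cup B$ with $B\subseteq X_k-A_{k-1}$ turns the inner sum into $(-1)^{|A_{k-1}|}\nu(A_{k-1},X_k)$, where $\nu(A_{k-1},X_k)=\sum(-1)^{|B|}$ ranges over $B\subseteq X_k-A_{k-1}$ with $\vee(A_{k-1}\cup B)=X_k$. The key observation is that this factor vanishes unless $A_{k-1}$ already equals the flat $X_{k-1}$. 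Indeed, since $A_{k-1}\subseteq X_{k-1}\subseteq X_k$ I can split $B=C\sqcup D$ with $C=B\cap(X_{k-1}-A_{k-1})$ and $D=B\cap(X_k-X_{k-1})$; because $C\subseteq X_{k-1}=\vee A_{k-1}$ adjoining $C$ does not change the closure, so the condition $\vee(A_{k-1}\cup B)=X_k$ reads $\vee(X_{k-1}\cup D)=X_k$ and is independent of $C$. This factors $\nu(A_{k-1},X_k)=\big(\sum_{C\subseteq X_{k-1}-A_{k-1}}(-1)^{|C|}\big)\big(\sum_{D}(-1)^{|D|}\big)$, and the first factor is $0$ unless $X_{k-1}-A_{k-1}=\emptyset$, i.e. $A_{k-1}=X_{k-1}$.

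Next, when $A_{k-1}=X_{k-1}$ I would identify the surviving factor $\sum_{D\subseteq X_k-X_{k-1},\ \vee(X_{k-1}\cup D)=X_k}(-1)^{|D|}$ with the classical interval value $\mu(X_{k-1},X_k)$. This is precisely the identity $\mu(\hat0,Z)=\sum_{\vee C=Z}(-1)^{|C|}$ applied in the contraction $M/X_{k-1}$, whose lattice of flats is the interval $[X_{k-1},X_k]$; the contraction is \emph{loopless} since $M$ is simple (any loop of $M/X_{k-1}$ would lie in $\vee X_{k-1}=X_{k-1}$), which is exactly what validates the identity. Combining everything and observing that the two factors $(-1)^{|X_{k-1}|}$ cancel, I obtain the recursion
\[
\mu^k(X_1,\dots,X_k)=\mu(X_{k-1},X_k)\,\mu^{k-2}(X_1,\dots,X_{k-2}),\qquad k\ge 2,
\]
with base cases $\mu^0=1$ and $\mu^1(X_1)=\mu(\hat0,X_1)$ (the latter being the $k=1$ case already noted in the excerpt).

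Finally I would unfold the recursion and take signs. Iterating gives $\mu^k=\prod_{j}\mu(X_{2j-1},X_{2j})$ for $k$ even and $\mu^k=\mu(\hat0,X_1)\prod_{j}\mu(X_{2j},X_{2j+1})$ for $k$ odd. By Rota's theorem each interval $[X_i,X_{i+1}]$ and $[\hat0,X_1]$ is a geometric lattice, so every factor is nonzero with $\mathrm{sgn}\,\mu(X,Y)=(-1)^{\rk(Y)-\rk(X)}=(-1)^{\rk(X)+\rk(Y)}$ and $\mathrm{sgn}\,\mu(\hat0,X_1)=(-1)^{\rk(X_1)}$. Multiplying these signs, in which each $\rk(X_i)$ appears exactly once across the factors, yields $(-1)^{\sum_{i=1}^k\rk(X_i)}=\prod_{i=1}^k(-1)^{\rk(X_i)}$ in both parities, which is the claim. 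I expect the main obstacle to be the vanishing/factorization step: establishing that the inner sum collapses unless $A_{k-1}=X_{k-1}$ and correctly matching the residual sum with $\mu(X_{k-1},X_k)$, in particular verifying that $M/X_{k-1}$ is loopless so that the closure–Möbius identity applies.
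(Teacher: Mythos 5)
Your proof is correct, but it takes a genuinely different route from the paper's. The paper also peels off the top set $A_k$, but in the opposite order of summation: it fixes $A_k$ outermost, recognizes the inner sum over chains inside $A_k$ as the $(k-1)$-st chain M\"obius function of the restriction $M|_{A_k}$ (again a simple matroid), and then finishes by induction on $k$ together with Rota's base case, reducing $k$ to $k-1$. You instead fix the lower chain and sum over $A_k$ innermost, proving a collapse: that inner sum vanishes unless $A_{k-1}=X_{k-1}$, in which case it equals the classical value $\mu(X_{k-1},X_k)$ via the closure--M\"obius identity in a loopless minor. This yields the exact product formula $\mu^k(X_1,\dots,X_k)=\mu(X_{k-1},X_k)\,\mu^{k-2}(X_1,\dots,X_{k-2})$, reducing $k$ to $k-2$ and unfolding to a product of interval M\"obius values. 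Your route buys more: it evaluates $\mu^k$ exactly (in particular proving it is nonzero), and it is rigorous at precisely the point where the paper is terse --- in the paper's ordering, the outer sum $\sum_{A_k}(-1)^{|A_k|}\mu^{k-1}_{M|_{A_k}}(\cdots)$ still carries terms of both signs with magnitudes varying with $A_k$, so ``applying the sign function and using induction'' is not immediate, and your factorization collapse is essentially the missing justification. Two small imprecisions in your write-up, neither fatal: the lattice of flats of $M/X_{k-1}$ is the interval $[X_{k-1},\hat{1}]$, so the correct minor is $(M|_{X_k})/X_{k-1}$, whose lattice is $[X_{k-1},X_k]$ (your restriction to $D\subseteq X_k-X_{k-1}$ already effects this); and looplessness of that minor follows from $X_{k-1}$ being a flat rather than from simplicity of $M$ --- simplicity is what is needed for the base case $\mu^1(X)=\mu(\hat{0},X)$.
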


\begin{proof}
We prove this by induction on $k$. The base case for $k=1$ (and $k=2$) is due to Rota in \cite[Theorem 4, pg. 357]{rota64}. We assume the result is true for $k-1$ and any matroid. So, we compute the $k^{th}$ chain M\"obius function on any chain of flats 
\begin{align*} 
\mu^k (X_1,\dots ,X_k) & =\sum\limits_{\substack{(A_i)\in \C_{\A}^k\\ \vee A_i=X_i}}\prod\limits_{i=1}^k(-1)^{|A_i|}\\
&=\sum\limits_{\substack{A_k\subseteq \A\\ \vee A_k=X_k}}(-1)^{|A_k|} \sum\limits_{\substack{(A_i)\in \C_{A_k}^{k-1}\\ \vee A_i=X_i}}\prod\limits_{i=1}^{k-1}(-1)^{|A_i|}\\
&=\sum\limits_{\substack{A_k\subseteq \A\\ \vee A_k=X_k}}(-1)^{|A_k|} \mu^k_{M|_{A_k}}(X_1,\dots ,X_{k-1}).\\ 
\end{align*}

Applying the sign function to the above and using induction finishes the proof. \end{proof}

\begin{example}\label{Boolean-chi}

Let $U_{n,n}$ be the uniform matroid with $n$ elements of rank $n$. These matroids have many names, for example Boolean matroids and free matroids. Using \cite[Example 3.6]{Wak-23}, Theorem \ref{tutteval} and Corollary \ref{prodform} we have that \[\chi^k_{U_{n,n}}(t_1,\ldots ,t_k)= \left( (-1)^{k} \left(1+ \sum\limits_{i=1}^k(-1)^i\prod\limits_{j=1}^it_j \right)\right)^n.\]

\end{example}

\section{Chain characteristic polynomials on graphs}\label{graps-sec} 

The original motivation for the classic characteristic polynomial comes from Whitney in \cite{Whitney-32} on the proper vertex coloring of graphs. We aim to preserve this connection by examining generalized notions of proper vertex colorings on a graph. However, we note that there may be other, more canonical, definitions for these generalizations. We first study vertex coloring in subsection \ref{vertex-sec} and then study generalized edge flows in subsection \ref{flow-sec}.

\subsection{Coupled multicoloring of graphs}\label{vertex-sec} Let $G=(V,E)$ be a graph with $|V|=n$ vertices. We note that when considering induced graphs on edges $G|_A$ these have the same number of vertices $n$.

\begin{definition}\label{def-coupled-multi}

A \emph{$k$-multicoloring} of an undirected graph $G$ is a $k$-tuple of functions $(f_1,\dots ,f_k)$, $f_i:V\to \C_i$ where the $\C_i$ are sets. For $k>1$ a \emph{coupled $k$-multicoloring} of an undirected graph $G$ is $k$-multicoloring such that for all edges $\{a,b\}\in E$ if $k=2\ell$ is even then the coloring functions must satisfy all of the following implications \[\begin{array}{l}

f_1(a)=f_1(b) \Rightarrow   f_2(a)=f_2(b)\\

f_1(a)=f_1(b) \text{ and } f_3(a)=f_3(b) \Rightarrow  f_4(a)=f_4(b)\\

\vdots \\

f_1(a)=f_1(b) \text{ and } \cdots \text{ and } f_{k-1}(a)=f_{k-1}(b) \Rightarrow  f_{k}(a)=f_{k}(b)\\

\end{array}
\] and in the case if $k=2\ell +1>1$ is odd then the coloring functions must satisfy all the following implications \[\begin{array}{l}

f_1(a)=f_1(b) \Rightarrow   f_2(a)=f_2(b)\\

f_1(a)=f_1(b) \text{ and } f_3(a)=f_3(b) \Rightarrow  f_4(a)=f_4(b)\\

\vdots \\

f_1(a)=f_1(b) \text{ and } \cdots \text{ and } f_{k-2}(a)=f_{k-2}(b) \Rightarrow  f_{k-1}(a)=f_{k-1}(b)\\

f_1(a)=f_1(b) \text{ and } \cdots \text{ and } f_{k-2}(a)=f_{k-2}(b) \Rightarrow  f_{k}(a)\neq f_{k}(b).\\

\end{array}
\] 

\end{definition}

One way to phrase this coupled $k$-multicoloring is to say that all the odd numbered coloring functions force the evens up to one higher to also be equal and in the odd case we also have the assumption that if all the odds are equal then the last is forced to not be equal. We note that the classical case, $k=1$, can be viewed by looking at the last line of Definition \ref{def-coupled-multi} as just a proper vertex coloring.

\begin{example}

Consider the complete graph $K_3=([3],E)$ on three vertices (where $E=\{\{a,b\} | a\neq b\in [3] \}$) and the pair of functions $f_1:V\to [2]$ and $f_2:V\to [2]$ where $f_1(1)=f_1(2)=1=f_2(3)$ and $f_2(1)=f_2(2)=2=f_1(3)$. The pair of functions $(f_1,f_2)$ is a coupled $2$-multicoloring of $K_3$ and we depict this coloring in Figure \ref{K_3-multi}. However, if we defined functions $g_1:V\to [2]$ and $g_2:V\to [2]$ where $g_1(1)=g_1(2)=1=g_2(1)=g_2(3)$ and $g_2(1)=2=f_1(3)$ then this pair would not be a coupled $2$-multicoloring.

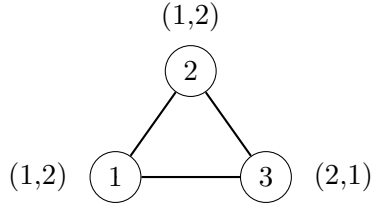
\begin{figure}

\begin{tikzpicture}
    %\draw[thick] (0,0)--(1,1.41)--(2,0)--(0,0);
    \node[below,left] at (-.5,0) {(1,2)};
\node[above] at (1,1.8) {(1,2)};
\node[right] at (2.5,0) {(2,1)};

    \node[shape=circle,draw=black] (1) at (0,0) {1};
\node[shape=circle,draw=black] (2) at (1,1.41) {2};
\node[shape=circle,draw=black] (3) at (2,0) {3};
\path [thick] (1) edge (2) edge (3) edge (1);
\path [thick] (3) edge (2);
    
\end{tikzpicture}\caption{$K_3$ labeled with a coupled 2-multicoloring}\label{K_3-multi}

\end{figure}

\end{example}

Now with the definition of coupled $k-$multicoloring we can define coupled chromatic function defined for a graph. Which we will then prove that this function is actually a polynomial.

\begin{definition}\label{multichrom-def}
For $k>1$ the \emph{k-coupled chromatic function} on a simple graph $G$ is the function 
\[ \chi_G^k:\mathbb{Z}_{>0}^k\to \mathbb{N}\] defined by $\chi_G^k(t_1,\dots ,t_k)=$the number of coupled $k$-multicolorings of $G$ with color sets sizes $t_1,\dots ,t_k$. If $k=1$ then we set $\chi_G^1=\chi_G$ to be the classic chromatic polynomial.
\end{definition}

Now we prove the main theorem of this subsection, Theorem \ref{multichrom}.

\begin{proof}[Proof of Theorem \ref{multichrom}]
We progress by induction on $k$. The base case $k=1$ is the classic formula found in Theorem \ref{Birk-thm}. Now suppose $k>1$ and set \[U_k=U_k(t_1,\ldots , t_k)=\prod\limits_{i=1}^k \Func (V,[t_i])\] to be the universe of all $k$-multicolorings of $G$ with $(t_1,\ldots,t_k)\in \ZZ^K_{\geq 1}$ many colors. Note that here and below we may leave out the color sets $[t_i]$ for convenience and compactness of notation. For each edge $\{a,b\} \in E$ we first let \[ P_{\{a,b\}}^1(t_1)=\left\{ f_1\in U_1(t_1) | \ f_1(a)= f_1(b) \right\} . \] These are the $1$-multicoloring functions which are strictly not coupled on the edge $\{a,b\}$ (a.k.a. not proper vertex coloring). Then for each $1<m\in \mathbb{Z}$ and each edge $\{a,b\} \in E$ we define the following sets recursively by 
\[ P_{\{a,b\}}^m(t_1,\ldots ,t_m)=\left\{  (f_1,\dots ,f_m)\in U_m\left|  \ f_1\in P_{\{a,b\}}^1 \text{ and } (f_2,\dots , f_m)\in \overline{P_{\{a,b\}}^{m-1}} \right. \right\} \] where $\overline{P_{\{a,b\}}^{m-1}}(t_2,\ldots ,t_m)=U_{m-1}(t_2,\ldots ,t_m)-P_{\{a,b\}}^{m-1}(t_2,\ldots ,t_m)$ is the compliment of the shifted up color sets. The set $\overline{P^k_{\{a,b\}}}$ is exactly the set of coupled $k$-multicolorings of the edge $\{a,b\}$ presented in Definition \ref{def-coupled-multi}. In order to get the total number of coupled $k-$multicolorings of the entire graph $G$ we can express \begin{equation}\label{cardpresent} \chi_G^k(t_1,\dots ,t_k)= \left| \bigcap\limits_{e\in E}\overline{P^k_e} \right| .\end{equation} Also, by construction we have \[ \bigcap\limits_{e\in A} P^k_{e}=\bigcap\limits_{e\in A}P^1_{e} \times \bigcap\limits_{e\in A}\overline{P^{k-1}_{e}} \] where $A\subseteq E$ is a subset of edges. Next, we use inclusion-exclusion applied to (\ref{cardpresent}) to get \begin{align}  \chi_G^k(t_1,\dots ,t_k)&=\sum\limits_{A_k\subseteq E}(-1)^{|A_k|}\left| \bigcap\limits_{e\in A_k} P^k_e \right|  \\
&=\sum\limits_{A_k\subseteq E}(-1)^{|A_k|} \left| \bigcap\limits_{e\in A_k}P^1_{e} \times \bigcap\limits_{e\in A_k}\overline{P^{k-1}_{e}} \right|  \\
& = \sum\limits_{A_k\subseteq E}(-1)^{|A_k|} t_1^{c(A_k)}\chi^{k-1}_{G|_{A_k}}(t_2,\ldots ,t_k) \label{colorsetsdecomp}
\end{align} because once all the edges in the induced graph $G|_{A_k}$ are forced to have the same color in $\bigcap\limits_{e\in A_k}P^1_{e}$ the number of choices for colors is the number of components. Using the fact that $c(A_i)=n-\rk(A_i)=\rk(G)+c(G)-\rk(A_i)$ and applying induction to (\ref{colorsetsdecomp}) we have 

 \begin{align*}  \chi_G^k(t_1,\dots ,t_k)&=\sum\limits_{A_k\subseteq E}(-1)^{|A_k|} t_1^{c(G)+\rk(G)-\rk(A_k)} (t_2\cdots t_k)^{c(G)}\chi_{M(G|_{A_k})}^k(t_k,\ldots ,t_2) \\ 
 &=(t_1\cdots t_k)^{c(G)}\sum\limits_{A_k\subseteq E}(-1)^{|A_k|} t_1^{\rk(G)-\rk(A_k)} \sum\limits_{(A_i)\in \C^{k-1}_{A_k}} \prod\limits_{i=1}^{k-1} (-1)^{|A_i|}t_{k-i+1}^{\rk(G)-\rk(A_i)} \\
 &=(t_1\cdots t_k)^{c(G)}\sum\limits_{(A_i)\in \C_{E}^k}\prod\limits_{i=1}^k (-1)^{|A_i|}t_{k-i+1}^{\rk(G)-\rk(A_i)} \\
 \end{align*} which is the desired result. To finish the proof, we apply Theorem \ref{tutteval} in the matroid interpretation $M(G)$. \end{proof}

Next, we compute the chain characteristic polynomials for a few important examples of graphs.

\begin{example}\label{color-com}
    Let $K_n$ be the complete graph with $n$ vertices. Using the computer algebra system Sage \cite{sage} we find that for the first few non-trivial terms $n=3,4,5$:

\begin{align*}
\chi^2_{K_3}(t_1,t_2)=&t_1^2  t_2^2 - 3 t_1^2 t_2 + 2 t_1^2 + 3 t_1 t_2 - 3 t_1 + 1,    \\
\chi^2_{K_4}(t_1,t_2)=&t_1^3  t_2^3 - 6 t_1^3  t_2^2 + 11 t_1^3 t + 6 t_1^2  t_2^2 - 6 t_1^3 - 18 t_1^2 t_2 + 12 t_1^2 + 7 t_1 t_2 - 7 t_1 + 1, \\
\chi^2_{K_5}(t_1,t_2)=&t_1^4  t_2^4 - 10 t_1^4  t_2^3 + 35 t_1^4  t_2^2 + 10 t_1^3  t_2^3 - 50 t_1^4 t_2 - 60 t_1^3  t_2^2 +\\
&24 t_1^4 + 110 t_1^3 t_2 + 25 t_1^2  t_2^2 - 60 t_1^3 - 75 t_1^2 t_2 + 50 t_1^2 + 15 t_1 t_2 - 15 t_1 + 1.
\end{align*}
\end{example}

%%%%%%%%%%%%%%%%%%%%%%%%%%%%%%%%%%%%%%%%%%%%%%%%%%%%%%%%%%%%%%%%%%%%%%%%%%%%%%
%%%%%%%%%%%%%%%%%%%%%%%%%%%%%%%%%%%%%%%%%%%%%%%%%%%%%%%%%%%%%%%%%%%%%%%%%%%%%%
%%%%%%%%%%%%%%%%%%%%%%%%%%%%%%%%%%%%%%%%%%%%%%%%%%%%%%%%%%%%%%%%%%%%%%%%%%%%%%
%%%%%%%%%%%%%%%%%%%%%%%%%%%%%%%%%%%%%%%%%%%%%%%%%%%%%%%%%%%%%%%%%%%%%%%%%%%%%%
%%%%%%%%%%%%%%%%%%%%%%%%%%%%%%%%%%%%%%%%%%%%%%%%%%%%%%%%%%%%%%%%%%%%%%%%%%%%%%
%%%%%%%%%%%%%%%%%%%%%%%%%%%%%%%%%%%%%%%%%%%%%%%%%%%%%%%%%%%%%%%%%%%%%%%%%%%%%%

\subsection{Coupled multicommodity flows}\label{flow-sec} Now we study an interpretation of the characteristic polynomial of the dual matroid as flow on the graph. 

\begin{definition}\label{couple-flow-def}
Let $G=(V,E)$ be an undirected graph and then fix some orientations on the edges in order to consider $G$ as a directed graph denoted by $G'=(V,E')$ where $E'\subseteq V\times V$ such that the underlying edges in $E'$ are the same as in $E$. For $k>1$ a \emph{coupled $k$-multicommodity flow} on $G'$ is an element $(f_1,\ldots ,f_k)\in \prod_{i=1}^k\Func (E,A_i)$ such that each $f_i$ satisfies Kirchhoff’s first law and where for all $1\leq i \leq k$, $A_i$ is an abelian group (viewed additively where $0$ is the identity element) such that for all $(a,b)\in E'$ in the case $k=2m$ is even the flow functions satisfy the implications

\[\begin{array}{l}

f_1(a,b)=0 \Rightarrow   f_2(a,b)=0\\

f_1(a,b)=0 \text{ and } f_3(a,b)=0 \Rightarrow  f_4(a,b)=0\\

\vdots \\

f_1(a,b)=0 \text{ and } \cdots \text{ and } f_{k-1}(a,b)=0 \Rightarrow  f_{k}(a,b)=0.\\

\end{array}\] and in the case $k=2m+1$ is odd, the flow functions satisfy the implications \[\begin{array}{l}

f_1(a,b)=0 \Rightarrow   f_2(a,b)=0\\

f_1(a,b)=0 \text{ and } f_3(a,b)=0 \Rightarrow  f_4(a,b)=0\\

\vdots \\

f_1(a,b)=0 \text{ and } \cdots \text{ and } f_{k-2}(a,b)=0 \Rightarrow [f_{k-1}(a,b)=0 \text{ and } f_{k}(a,b)\neq  0].\\

\end{array}\]

\end{definition}

If $k=1$ we consider the usual nowhere zero flow as our interpretation of coupled 1-multicommodity flow.  Now in order to enumerate flows we define a function which counts all coupled multicommodity flows.

\begin{definition}\label{flow-poly-def}
    Again, given $G=(V,E)$ an undirected graph with some orientations on the edges fixed $G'=(V,E')$. For $k\geq 1$ the coupled $k$-multicommodity flow function on $G'$ is  $\mathrm{cFlow}^k_{G'}(A_1,\ldots ,A_k)=$the number of all coupled $k$-multicommodity flows on $G'$ to the abelian groups $A_i$.
\end{definition}

Our first result shows that $\mathrm{cFlow}^k_{G'}(A_1,\ldots ,A_k)$ does not depend on the orientation of the edges.

\begin{lemma}

If $G'$ and $G''$ are different orientations of the same underlying graph $G$ then \[ \mathrm{cFlow}^k_{G'}(A_1,\ldots ,A_k)=\mathrm{cFlow}^k_{G''}(A_1,\ldots ,A_k).\]

\end{lemma}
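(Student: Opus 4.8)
The plan is to show that the number of coupled $k$-multicommodity flows is unchanged when we reverse the orientation of a single edge, since any two orientations $G'$ and $G''$ of the same underlying graph $G$ differ by a finite sequence of single-edge reversals. So it suffices to fix one edge $e_0=(a,b)$ in $G'$ and produce a bijection between the coupled $k$-multicommodity flows on $G'$ and those on the graph $G''$ obtained by replacing $e_0$ with $(b,a)$ and leaving all other oriented edges alone.

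\emph{The candidate bijection.} Given a flow tuple $(f_1,\dots,f_k)$ on $G'$, I would define $(g_1,\dots,g_k)$ on $G''$ by negating each component on the reversed edge and leaving it untouched elsewhere: for each $i$, set $g_i(b,a)=-f_i(a,b)$ and $g_i(u,v)=f_i(u,v)$ for every other oriented edge $(u,v)$. This map is manifestly an involution between $\prod_i\Func(E,A_i)$ for the two orientations (negation is a bijection on each abelian group $A_i$), so the only thing to verify is that it carries valid coupled multicommodity flows to valid coupled multicommodity flows.

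\emph{Checking the two defining conditions.} First, each $g_i$ must still satisfy Kirchhoff's first law at every vertex. At vertices other than $a$ and $b$ nothing changes; at $a$ and $b$, reversing the edge swaps the roles of "incoming" and "outgoing" for $e_0$, and the compensating sign $g_i(b,a)=-f_i(a,b)$ is exactly what keeps the conservation equation balanced. This is the standard verification that reversing an edge and negating the flow on it preserves the Kirchhoff condition, which is already implicit in the remark after Theorem \ref{flow1-poly-thm} that $\mathrm{Flow}^1_G$ is direction-independent. Second, I must check the coupled implications from Definition \ref{couple-flow-def}. The key observation is that every condition in that definition is phrased purely in terms of whether $f_i(a,b)=0$ or $f_i(a,b)\neq 0$, and since $-x=0\iff x=0$ in any abelian group, we have $g_i(b,a)=0\iff f_i(a,b)=0$. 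Thus each atomic statement "$f_i(e_0)=0$" is equivalent to the corresponding "$g_i(e_0)=0$", so the entire system of implications (in both the even and odd cases) holds for $(g_i)$ on $e_0$ exactly when it holds for $(f_i)$ on $e_0$; on all other edges the conditions are literally identical.

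I expect the only mild subtlety — the \textbf{main point to be careful about} — to be bookkeeping the odd-case implication, where the conclusion asserts both $f_{k-1}(a,b)=0$ and $f_k(a,b)\neq 0$; but this too is a conjunction of vanishing/non-vanishing statements, so it is preserved under negation by the same $-x=0\iff x=0$ principle. Having established that the involution restricts to a bijection between the two sets of valid coupled flows, the equality of cardinalities $\mathrm{cFlow}^k_{G'}(A_1,\dots,A_k)=\mathrm{cFlow}^k_{G''}(A_1,\dots,A_k)$ follows immediately, and passing through an arbitrary sequence of edge reversals gives the result for any two orientations of $G$.
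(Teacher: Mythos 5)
Your proof is correct and takes essentially the same approach as the paper: reverse one edge at a time and apply the bijection that negates each $f_i$ on the reversed edge, $\tilde{f}_i(b,a)=-f_i(a,b)$, leaving all other edges untouched, using that negation is a bijection on each abelian group $A_i$ with $-x=0 \iff x=0$. If anything, your write-up is more explicit than the paper's, which compresses the verification of Kirchhoff's law and the coupled implications into a single sentence about uniqueness of inverses and the negation of zero.
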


\begin{proof}
Let $G'$ be an orientation of $G$ and suppose that $(a,b)\in E'$. Consider the directed graph $\tilde{G}=(V,\tilde{E})$ whose edges are the same as $G'$ except replace $(a,b)$ with $(b,a)$. We build a bijection from the set of coupled $k$-multicommodity flows on $G'$ to those on $\tilde{G}$. Let $(f_1,\ldots ,f_k)$ be a coupled $k$-multicommodity flow on $G'$. Define $\varphi (f_1,\ldots ,f_k)=(\tilde{f}_1,\ldots , \tilde{f}_k)$ where for all $1\leq i\leq k$ and all $(x,y)\in \tilde{E}-(b,a)$, $\tilde{f}_i(x,y)=f_i(x,y)$ and $\tilde{f}_i(b,a)=-f_i(a,b)$. Since for all $1\leq i\leq k$, $A_i$ are abelian groups, we have that the uniqueness of the additive inverse and the negation of zero is zero. Hence, $\varphi (f_1,\ldots ,f_k)$ is a coupled $k$-multicommodity flow on $\tilde{G}$ and $\varphi$ is a bijection. \end{proof}

This lemma allows us to define coupled multicommodity flows on an undirected graph.

\begin{definition}
Let $G$ be an undirected graph and $G'$ be any orientation of $G$. Then the number of multicommodity flows on $G$ to the abelian groups $A_1,\ldots , A_k$ is \[\mathrm{Flow}^k_{G}(A_1,\ldots ,A_k)=\mathrm{cFlow}^k_{G'}(A_1,\ldots ,A_k).\]
\end{definition}

Now we prove the main formula for coupled multicommodity flows, which as a corollary also implies that the number of flows only depends on the sizes of the abelian groups.

\begin{proof}[Proof of Theorem \ref{multiflow-thm}] Similarly to coupled multicoloring we use induction on $k$. The base case is the classic result: Theorem \ref{flow1-poly-thm}. Then for a fixed $k>1$ we assume the formula holds. For $1\leq i\leq m$ and abelian groups $A_i$ we set \[V_m=\left\{ (f_1,\ldots,f_k)\in \prod\limits_{i=1}^m\Func (E,A_i)\right| \left. \forall i,\forall v\in V,\ \sum\limits_{(a,v)\in E}f_i(a,v)=\sum\limits_{(v,b)\in E}f_i(v,b) \right\}.\] Note that we can adorn these sets with the abelian groups by $V_m=V_m(A_1,A_2,\dots ,A_m)$.

For $e\in E$ let \[Q_e^1=Q_e^1(A_1) =\{ f_1\in V_1| f_1(e)=0 \}\] be the set of flows on $G$ which are zero on the edge $e$. Then we can recursively define flow sets by \[ Q_e^m=Q_e^m(A_1,\ldots ,A_m)=\left\{ (f_1,\ldots ,f_m)\in V_m \left| f_1\in Q^1_e \text{ and } (f_2,\ldots ,f_m)\in \overline{Q_e^{m-1}} \right. \right\} .\]
For $S_1\subseteq E$ \[ \bigcap\limits_{e\in S_1} Q_e^k(A_1,\ldots,A_k)=\bigcap\limits_{e\in S_1} Q_e^1(A_1) \times \bigcap\limits_{e\in S_1} \overline{Q_e^{k-1}(A_2,\ldots ,A_k)} .\] Now applying inclusion-exclusion and this product principle to the number of coupled $k$-multicommodity flows we have

\begin{align}  \mathrm{Flow}^k_{G}(A_1,\ldots ,A_k)&=\sum\limits_{S_1\subseteq E}(-1)^{|S_1|}\left|\bigcap\limits_{e\in S_1} Q_e^k\right|\label{PIE} \\
&=\sum\limits_{S_1\subseteq E}(-1)^{|S_1|}\left|\bigcap\limits_{e\in S_1} Q_e^1 (A_1)\right| \cdot \left|\bigcap\limits_{e\in S_1} \overline{Q_e^{k-1} (A_2,\ldots ,A_k)}\right| . \label{prodsum} \end{align} The first term of (\ref{prodsum}) sets all flows to be zero on $S_1$ but leaves the flows free (up to Kirchhoff's first law) on the complement of $S_1$. Using this and another principle of inclusion-exclusion on the last term (\ref{prodsum}) becomes

\begin{equation}\sum\limits_{S_1\subseteq E}(-1)^{|S_1|} t_1^{|E-S_1|-\rk(E-S_1)} \sum\limits_{S_2\subseteq S_1} (-1)^{|S_2|} \left|\bigcap\limits_{e\in S_2} Q_e^{k-1}(A_2,\ldots ,A_k)\right| . \label{innersum}\end{equation}
This last inner summation is the coupled $(k-1)$-multicommodity flow restricted to $S_1$ (it is the same as (\ref{PIE}) but with $k-1$) but that satisfy Kirchhoff's first law on the entire graph. Hence, by induction (\ref{innersum}) becomes \begin{equation}\sum\limits_{S_1\subseteq E}(-1)^{|S_1|} t_1^{|E-S_1|-\rk(E-S_1)} (-1)^{(k-1)|E|}\sum\limits_{(S_i)\in \C^{k-1}_{S_1}} \prod\limits_{i=2}^k(-1)^{|S_i|}t_i^{|E-S_i|-\rk(E-S_i)} \label{justpolys} \end{equation} where this last inner sum is indexed in reversed order, so $(S_i)\in \C^{k-1}_{S_1}$ here means $S_k\subseteq S_{k-1}\subseteq \cdots \subseteq S_2 \subseteq S_1$. Reindexing via complements by setting $B_i=E-S_i$ the reverse chain $S_k\subseteq S_{k-1}\subseteq \cdots \subseteq S_2 \subseteq S_1$ is $B_1\subseteq \cdots \subseteq B_k$. Also using the complement $(-1)^{|S_1|}=(-1)^{|E|}(-1)^{|E-S_1|}$. Putting this all together in (\ref{justpolys}) we have the coupled $k$-multicommodity flow polynomial is \begin{equation*}
(-1)^{k|E|}\sum\limits_{(B_i)\in \C_{E}^k}\prod\limits_{i=1}^k (-1)^{|B_i|}t_{i}^{|B_i|-\rk(B_i)} \\
 \end{equation*} which is the desired result since $(B_i)$ is not a reversed chain. To finish the proof, we apply Theorem \ref{tutteval} in the matroid interpretation. \end{proof}

We finish this section by contrasting Example \ref{color-com} on coupled multicoloring to that of the same examples with coupled multicommodity flow.

\begin{example}\label{flow-com}
    Again using the computer algebra system Sage \cite{sage} the coupled 2-multicommodity flow polynomials for the first few non-trivial complete graphs are

\begin{align*}
\mathrm{Flow}^2_{K_3}(t_1,t_2)=&t_1t_2 - t_2 + 1,    \\
\mathrm{Flow}^2_{K_4}(t_1,t_2)=&t_1^3t_2^3 - 6t_1^2t_2^3 + 6t_1^2t_2^2 + 11t_1t_2^3 - 18t_1t_2^2 - 6t_2^3 + 7t_1t_2 + 12t_2^2 - 7t_2 + 1, \\
\mathrm{Flow}^2_{K_5}(t_1,t_2)=& t_1^6   t_2^6 - 10   t_1^5   t_2^6 + 10   t_1^5   t_2^5 + 45   t_1^4   t_2^6 - 90   t_1^4   t_2^5 - 115   t_1^3   t_2^6 + 45   t_1^4   t_2^4 +\\
&340   t_1^3   t_2^5 + 175   t_1^2   t_2^6 - 330   t_1^3   t_2^4 - 670   t_1^2   t_2^5 - 147   t_1   t_2^6 + 105   t_1^3   t_2^3 + 930   t_1^2   t_2^4 +\\
&670   t_1   t_2^5 + 51   t_2^6 - 550   t_1^2   t_2^3 - 1155   t_1   t_2^4 - 260   t_2^5 + 115   t_1^2   t_2^2 + 925   t_1   t_2^3 + 510   t_2^4\\
&- 330   t_1   t_2^2 - 480   t_2^3 + 37   t_1   t_2 + 215   t_2^2 - 37   t_2 + 1.
\end{align*} In the case of $K_4$, this calculation illustrates the duality formula in Proposition \ref{Tutte-dual} and the variable swap since $K_4$ is selfdual.
\end{example}

\section{Problems}

Due to the ubiquity of characteristic polynomials in matroid and arrangement theory we list some possible projects for future research. The basic idea is to just generalize classic results on the characteristic polynomial $\chi^1$. Many of these problem overlap in some significant ways. 

\subsection{Chambers and faces of real arrangements} Given a real hyperplane arrangement $\A$, Zaslavsky proved in \cite[Theorem A]{Z75} that the M\"obious polynomial $\mathcal{M}_\A(s,t)$ contained the information of the faces (i.e. the real components of the arrangements partition of Euclidean space) of $\A$ in $\mathbb{R}^r$. In particular, Zaslavsky showed that \[ f_\A(x)=(-1)^r\mathcal{M}(-x,-1)\] where $f_\A(x)=\sum\limits_{i=0}^rf_ix^{r-i}$ is the face polynomial of $\A$ (i.e. $f_i$ is the number of $i$ dimensional faces). Because of Theorem \ref{Mobius} we then have that \[f_\A(x)= x^r\chi_\A^2(-x^{-1},-1).\] From this we can also see that $\chi_\A^2(-1,-1)$ gives the total number of faces of $\A$. In the case of the complete graph we have that $\chi^2_{M(K_n)}(-1,-1)$ is the $n^{th}$ ordered Bell number or Fubini number. This leads us to the following problem.

\begin{prob}
    Is there an interpretation of faces of a real arrangement (possibly relative) which is enumerated by the higher characteristic polynomials $\chi^k_\A$? In particular, what does $\chi^k_\A(-1,\ldots ,-1)$ count for real arrangements?
\end{prob}

Note that Example \ref{Boolean-chi} gives that \[ \chi_{U_{n,n}}^k(-1,\ldots ,-1)=(1+k)^n.\]

\subsection{Orlik-Solomon algebras} If $M$ is a realizable matroid over the complex numbers by the hyperplane arrangement $\A$ the classic result by Orlik and Solomon in \cite{OS-80} is that  \[(-1)^{\rk(M(\A))}\chi_M^1(-t^{-1})=\sum\limits_{i=0}^{\rk (M)}\dim (H^i(U(\A)))t^i\] where $U(\A)$ is the complex complement of $\A$ (this can be stated more generally in terms of the Orlik-Solomon algebra as can be seen in \cite[Theorem 3.68]{OT}). This result together with Corollary \ref{cor-sign} leads us to propose the following problem.

\begin{prob}
    Is there a multigraded algebra associated to a matroid $M$, possibly generalizing the Orlik-Solomon algebra, who's multigraded Hilbert series is equal to the evaluation of the chain Tutte polynomials $(-1)^{k \rk (M)} \chi^k_M(-t^{-1}_1,\dots , -t^{-1}_k)$?
\end{prob}

\subsection{Arrangements over finite fields} If $M$ is realizable over a finite field $\F_q$ by an arrangement $\A\subseteq \F_q^n$ then Athanasiadis showed in \cite{Ath-96} that \[\chi^1_\A(q)=\left| \F_q^n-\bigcup_{H\in\A} H \right| .\] This result leads to the next problem.

\begin{prob}

If a matroid $M$ is realizable over a finite field of size $q$ then what does $\chi^k(q,\ldots ,q)$ count?
    
\end{prob}

\subsection{Free arrangements} Terao's factorization theorem (see \cite{Terao-80}) states that if an arrangement $\A$ is free (meaning the module of polynomial logarithmic derivations is a free module over the polynomial ring) then $\chi^1_\A(t)$ factors over the integers with roots given by the polynomial degrees of the generators.

\begin{prob}
Suppose an arrangement $\A$ is free. For $k>1$ is there some decomposition of $\chi^k_\A$ given by the generators of the module of polynomial logarithmic derivations? In particular, can one determine $\chi^k$ from the polynomial degrees of the generators or the betti numbers of the Jacobian ideal?
\end{prob}

Note that one can see in Example \ref{color-com} that these polynomials do not factor over the integers, yet they are well-known to be free arrangements. 

\subsection{Coefficients of $\chi^k_M$} Since the innovation by June Huh in \cite{Huh12} on chromatic polynomials of graphs there has been a renaissance in showing various properties, like log concavity (i.e. the coefficients $c-i$ satisfy $c_i^2\geq c_{i-1}c_{i+1}$), of the classic characteristic polynomials for arbitrary matroids (see \cite{AHK17}) and various other similar invariants (for example see \cite{BEST23,EH20,BH20}). Unfortunately, it turns out that, in general, not all coefficients of $\chi_M^k(t,\ldots ,t)$ form logarithmically concave sequences.

\begin{example}\label{notlog}
    Let $M_{K_4}$ be the matroid of the complete graph on 4 vertices. Then the 3rd chain characteristic polynomial condensed by evaluating at $t_1=t$, $t_2=t$, and $t_3=t$ is \[ \chi^3_{M_{K_4}}(t,t,t)= t^9 - 6t^8 + 17t^7 - 30t^6 + 37t^5 - 37t^4 + 30t^3 - 17t^2 + 11t - 6.\] Examining the second inequality to check we find that $17^2<30*11$. Hence, these coefficients do not form a logarithmically concave sequence.
\end{example}

While Example \ref{notlog} is discouraging, the sequence of coefficients is unimodal. Furthermore, we computed many examples of $\chi^2_M$ and found that in every example the sequence of coefficients was logarithmically concave. These computations led us to the following problem and conjecture.

\begin{prob}
 Do the coefficients of $\chi^k(t,\ldots ,t)$ for all $k$ or some specific $k$ satisfy any properties like unimodal? Is there some other evaluation of $\chi^k_M$ where the coefficients satisfy any properties?
\end{prob}

Given the relation to the M\"obius polynomial in Theorem \ref{Mobius} and many computations, we propose the following conjecture. 

\begin{conjecture}
    For any matroid, the coefficients of the polynomial $\chi_M^2(t,t)$ form a logarithmically concave sequence.
\end{conjecture}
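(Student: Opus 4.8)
The plan is to recast the conjecture as a statement about the M\"obius function of the lattice of flats and then attack it with the Hodge-theoretic machinery behind all known matroid log-concavity results. Combining Theorem \ref{Mobius} with the M\"obius decomposition of $\chi^k_M$ gives $\mu^2(X,Y)=\mu(X,Y)$, so
\[ \chi_M^2(t,t)=\sum_{X\subseteq Y\in L(M)}\mu(X,Y)\,t^{2\rk(M)-\rk(X)-\rk(Y)}=\sum_{X\in L(M)}t^{\rk(M)-\rk(X)}\,\chi_{M/X}(t). \]
By Theorem \ref{thm-sign} every M\"obius value contributing to the coefficient $c_d$ of $t^d$ carries the sign $(-1)^d$, so $|c_d|=\sum_{\rk(X)+\rk(Y)=2\rk(M)-d}|\mu(X,Y)|$ and the conjectured inequality $c_d^2\geq c_{d-1}c_{d+1}$ is equivalent to log-concavity of the nonnegative sequence $(|c_d|)$. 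A clean first reduction is to connected matroids: by Corollary \ref{prodform} the diagonal polynomial is multiplicative over direct sums, so its coefficient sequence is the convolution of the factors' sequences, and the convolution of log-concave (internal-zero-free, nonnegative) sequences is again log-concave; it thus suffices to prove the conjecture for connected $M$.

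The obstruction that must be overcome is visible in the second form above. Each contraction characteristic polynomial $\chi_{M/X}(t)$ has a log-concave coefficient sequence by Adiprasito--Huh--Katz \cite{AHK17}, but $\chi^2_M(t,t)$ is a monomial-shifted sum of these, and (shifted) sums of log-concave sequences are not log-concave in general. Hence no term-by-term argument can work, and the entire difficulty is to exhibit structure forcing the antidiagonal sums $|c_d|$ to remain log-concave.

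The route I would pursue is to realize $|c_d|$ as mixed intersection numbers in a graded algebra satisfying a K\"ahler package. Concretely, one would construct a ``doubled'' or ``nested'' Chow-type ring attached to $M$ whose graded pieces are indexed by nested pairs of flats $X\subseteq Y$ --- morally the cohomology of a correspondence or fibration built from two wonderful compactifications --- prove that it enjoys Poincar\'e duality, Hard Lefschetz, and the Hodge--Riemann relations, and identify $|c_d|$ with the degree of a mixed product $\ell_1^{a}\ell_2^{b}$ of two nef-type classes, with $d$ governing the mixing. The Hodge--Riemann relation in degree one then yields exactly the plain log-concavity $|c_d|^2\geq|c_{d-1}||c_{d+1}|$, in the same Khovanskii--Teissier pattern used in \cite{AHK17}; this is the right strength of output, since the full Lorentzian property of the homogenization $\sum_d|c_d|t^dz^{2\rk(M)-d}$ would amount to ultra-log-concavity of $(|c_d|)$, which is stronger than conjectured and may fail.

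The hardest part, as in every instance of this program, is the construction itself: building the correct algebra and establishing Hard Lefschetz and Hodge--Riemann for it, with the semismall-decomposition and intersection-cohomology arguments of Braden--Huh--Matherne--Proudfoot--Wang as the natural model. A genuine subtlety beyond bookkeeping is that $|c_d|$ is inherently mixed --- it aggregates contributions over all pairs $(X,Y)$ of a fixed total corank $d$ --- so it is unlikely to be the dimension of a single graded piece and must instead be extracted as a mixed degree, forcing one to control a two-parameter Lefschetz operator rather than a single ample class. An alternative that avoids new geometry would be to prove the homogenization Lorentzian directly from the closure properties of Lorentzian polynomials, leveraging valuativity and the direct-sum multiplicativity of $\chi^2$ together with a deletion--contraction recursion; but the sum-over-flats structure again blocks this, since the Lorentzian class is not closed under addition.
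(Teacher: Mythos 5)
You are addressing a statement that the paper itself poses only as a conjecture, supported by computations; the paper contains no proof, and your proposal does not close the gap either. To give credit where due, your preliminary reductions are correct: Theorem \ref{Mobius} together with $\mu^2=\mu$ gives $\chi^2_M(t,t)=\sum_{X\leq Y\in L(M)}\mu(X,Y)\,t^{2\rk(M)-\rk(X)-\rk(Y)}$; Theorem \ref{thm-sign} (or Rota's sign theorem on intervals of geometric lattices) shows the coefficient $c_d$ of $t^d$ has sign $(-1)^d$, so $|c_d|$ is a sum of $|\mu(X,Y)|$ over pairs of fixed total corank; and the reduction to connected matroids via Corollary \ref{prodform} is sound, since the absolute-value sequences convolve, have no internal zeros (as $\mu(X,Y)\neq 0$ on such intervals and every total corank between $0$ and $2\rk(M)$ is realized), and positive log-concave sequences without internal zeros are closed under convolution. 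You also isolate the true obstruction: $\chi^2_M(t,t)=\sum_X t^{\rk(M)-\rk(X)}\chi_{M/X}(t)$ is a shifted sum of characteristic polynomials, each log-concave by \cite{AHK17}, but log-concavity is not preserved under addition, so no termwise argument can work.

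From that point on, however, what you have written is a research program, not a proof, and you acknowledge this yourself (``the hardest part \dots is the construction itself''). All of the load-bearing content is hypothetical: no candidate ``doubled'' Chow-type ring indexed by nested pairs of flats is defined, no degree map is specified, Poincar\'e duality, Hard Lefschetz, and the Hodge--Riemann relations are asserted as goals rather than established, and --- most critically --- no argument is given that the antidiagonal sums $|c_d|$, which aggregate contributions over all pairs $(X,Y)$ with $\rk(X)+\rk(Y)=2\rk(M)-d$, are realized as mixed degrees $\deg(\ell_1^a\ell_2^b)$ of two nef-type classes at all. As you yourself observe, $|c_d|$ is unlikely to be the dimension of a single graded piece, and whether it admits any such intersection-theoretic interpretation is precisely the open problem; naming the Khovanskii--Teissier/AHK pattern identifies a plausible strategy but executes none of its steps. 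Your alternative route via Lorentzian polynomials is likewise blocked for the reason you state (the Lorentzian class is not closed under addition). So the conjecture remains open after your proposal, exactly as it is in the paper.
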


\bibliographystyle{amsplain}
%    Insert the bibliography data here.

\bibliography{whitneyb}

\providecommand{\bysame}{\leavevmode\hbox to3em{\hrulefill}\thinspace}
\providecommand{\MR}{\relax\ifhmode\unskip\space\fi MR }
% \MRhref is called by the amsart/book/proc definition of \MR.
\providecommand{\MRhref}[2]{%
  \href{http://www.ams.org/mathscinet-getitem?mr=#1}{#2}
}
\providecommand{\href}[2]{#2}
\begin{thebibliography}{10}

\bibitem{ATW-07}
Takuro Abe, Hiroaki Terao, and Max Wakefield, \emph{The characteristic
  polynomial of a multiarrangement}, Adv. Math. \textbf{215} (2007), no.~2,
  825--838. \MR{2355609}

\bibitem{AHK17}
Karim Adiprasito, June Huh, and Eric Katz, \emph{Hodge theory for combinatorial
  geometries}, Ann. of Math. (2) \textbf{188} (2018), no.~2, 381--452.
  \MR{3862944}

\bibitem{Ardila-15}
Federico Ardila, \emph{Algebraic and geometric methods in enumerative
  combinatorics}, Handbook of enumerative combinatorics, Discrete Math. Appl.
  (Boca Raton), CRC Press, Boca Raton, FL, 2015, pp.~3--172. \MR{3409342}

\bibitem{AS-20}
Federico Ardila and Mario Sanchez, \emph{Valuations and the {H}opf {M}onoid of
  {G}eneralized {P}ermutahedra}, Int. Math. Res. Not. IMRN (2023), no.~5,
  4149--4224. \MR{4565665}

\bibitem{Ath-96}
Christos~A. Athanasiadis, \emph{Characteristic polynomials of subspace
  arrangements and finite fields}, Adv. Math. \textbf{122} (1996), no.~2,
  193--233. \MR{1409420}

\bibitem{BEST23}
Andrew Berget, Christopher Eur, Hunter Spink, and Dennis Tseng,
  \emph{Tautological classes of matroids}, Inventiones mathematicae
  \textbf{233} (2023), no.~2, 951--1039 (eng).

\bibitem{Birk12}
George~D. Birkhoff, \emph{A determinant formula for the number of ways of
  coloring a map}, Ann. of Math. (2) \textbf{14} (1912/13), no.~1-4, 42--46.
  \MR{1502436}

\bibitem{Boll-98}
B\'ela Bollob\'as, \emph{Modern graph theory}, Graduate Texts in Mathematics,
  vol. 184, Springer-Verlag, New York, 1998. \MR{1633290}

\bibitem{bren03}
Francesco Brenti, \emph{{$P$}-kernels, {IC} bases and {K}azhdan-{L}usztig
  polynomials}, J. Algebra \textbf{259} (2003), no.~2, 613--627. \MR{1955535}

\bibitem{Bry72}
Thomas~H. Brylawski, \emph{A decomposition for combinatorial geometries},
  Trans. Amer. Math. Soc. \textbf{171} (1972), 235--282. \MR{309764}

\bibitem{BH20}
Petter Brändén and June Huh, \emph{Lorentzian polynomials}, Annals of
  mathematics \textbf{192} (2020), no.~3, 821--891 (eng).

\bibitem{Crapo-69}
Henry~H. Crapo, \emph{The {T}utte polynomial}, Aequationes Math. \textbf{3}
  (1969), 211--229. \MR{262095}

\bibitem{sage}
The~Sage Developers, \emph{{S}age {M}athematics {S}oftware ({V}ersion 8.1)},
  2020, {\tt http://www.sagemath.org}.

\bibitem{EPW-16}
Ben Elias, Nicholas Proudfoot, and Max Wakefield, \emph{The {K}azhdan-{L}usztig
  polynomial of a matroid}, Adv. Math. \textbf{299} (2016), 36--70.
  \MR{3519463}

\bibitem{E20}
Christopher Eur, \emph{Divisors on matroids and their volumes}, J. Combin.
  Theory Ser. A \textbf{169} (2020), 105135, 31. \MR{4011081}

\bibitem{EH20}
Christopher Eur and June Huh, \emph{Logarithmic concavity for morphisms of
  matroids}, Advances in mathematics (New York. 1965) \textbf{367} (2020),
  107094-- (eng).

\bibitem{EHL-23}
Christopher Eur, June Huh, and Matt Larson, \emph{Stellahedral geometry of
  matroids}, Forum Math. Pi \textbf{11} (2023), Paper No. e24, 48. \MR{4653766}

\bibitem{GRSW-05}
Israel Gelfand, Vladimir Retakh, Shirlei Serconek, and Robert~Lee Wilson,
  \emph{On a class of algebras associated to directed graphs}, Selecta
  Mathematica \textbf{11} (2005), no.~2, 281.

\bibitem{Huh12}
June Huh, \emph{Milnor numbers of projective hypersurfaces and the chromatic
  polynomial of graphs}, Journal of the American Mathematical Society
  \textbf{25} (2012), no.~3, 907--927 (eng).

\bibitem{JKU-21}
David Jensen, Max Kutler, and Jeremy Usatine, \emph{The motivic zeta functions
  of a matroid}, J. Lond. Math. Soc. (2) \textbf{103} (2021), no.~2, 604--632.
  \MR{4230913}

\bibitem{JV-21}
Trygve Johnsen and Hugues Verdure, \emph{M\"{o}bius and coboundary polynomials
  for matroids}, Des. Codes Cryptogr. \textbf{89} (2021), no.~9, 2163--2177.
  \MR{4302190}

\bibitem{JW-24}
John Johnson and Max Wakefield, \emph{A non-associative incidence near-ring
  with a generalized {M}\"{o}bius function}, Ars Math. Contemp. \textbf{24}
  (2024), no.~2, Paper No. 3, 24. \MR{4646102}

\bibitem{Jur-12}
Relinde Jurrius, \emph{Relations between {M}\"{o}bius and coboundary
  polynomials}, Math. Comput. Sci. \textbf{6} (2012), no.~2, 109--120.
  \MR{2966347}

\bibitem{OS-80}
Peter Orlik and Louis Solomon, \emph{Combinatorics and topology of complements
  of hyperplanes}, Invent. Math. \textbf{56} (1980), no.~2, 167--189.
  \MR{558866}

\bibitem{OT}
Peter Orlik and Hiroaki Terao, \emph{Arrangements of hyperplanes}, Grundlehren
  der Mathematischen Wissenschaften [Fundamental Principles of Mathematical
  Sciences], vol. 300, Springer-Verlag, Berlin, 1992. \MR{1217488}

\bibitem{Oxley}
James Oxley, \emph{Matroid theory}, second ed., Oxford Graduate Texts in
  Mathematics, vol.~21, Oxford University Press, Oxford, 2011. \MR{2849819}

\bibitem{Proud-18}
Nicholas Proudfoot, \emph{The algebraic geometry of
  {K}azhdan-{L}usztig-{S}tanley polynomials}, EMS Surv. Math. Sci. \textbf{5}
  (2018), no.~1-2, 99--127. \MR{3880222}

\bibitem{RW-09}
Vladimir Retakh and Robert~Lee Wilson, \emph{Algebras associated to acyclic
  directed graphs}, Advances in Applied Mathematics \textbf{42} (2009), no.~1,
  42--59.

\bibitem{rota64}
Gian-Carlo Rota, \emph{On the foundations of combinatorial theory. {I}.
  {T}heory of {M}\"obius functions}, Z. Wahrscheinlichkeitstheorie und Verw.
  Gebiete \textbf{2} (1964), 340--368 (1964). \MR{0174487}

\bibitem{Stan94}
Richard~P. Stanley, \emph{Flag {$f$}-vectors and the {$cd$}-index}, Math. Z.
  \textbf{216} (1994), no.~3, 483--499. \MR{1283084}

\bibitem{Terao-80}
Hiroaki Terao, \emph{Free arrangements of hyperplanes and unitary reflection
  groups}, Proc. Japan Acad. Ser. A Math. Sci. \textbf{56} (1980), no.~8,
  389--392. \MR{596011}

\bibitem{Tutte-poly}
W.~T. Tutte, \emph{A contribution to the theory of chromatic polynomials},
  Canad. J. Math. \textbf{6} (1954), 80--91. \MR{61366}

\bibitem{Wak-23}
Max Wakefield, \emph{Chain tutte polynomials}, arXiv:2305.02874, 2023.

\bibitem{W76}
D.~J.~A. Welsh, \emph{Matroid theory}, Academic Press [Harcourt Brace
  Jovanovich, Publishers], London-New York, 1976, L. M. S. Monographs, No. 8.
  \MR{0427112}

\bibitem{Whitney-32}
Hassler Whitney, \emph{The coloring of graphs}, Ann. of Math. (2) \textbf{33}
  (1932), no.~4, 688--718. \MR{1503085}

\bibitem{Whitney-35}
\bysame, \emph{On the {A}bstract {P}roperties of {L}inear {D}ependence}, Amer.
  J. Math. \textbf{57} (1935), no.~3, 509--533. \MR{1507091}

\bibitem{Z75}
Thomas Zaslavsky, \emph{Facing up to arrangements: face-count formulas for
  partitions of space by hyperplanes}, Mem. Amer. Math. Soc. \textbf{1} (1975),
  no.~issue 1, 154, vii+102. \MR{0357135 (50 \#9603)}

\bibitem{Zas87}
\bysame, \emph{The {M}\"{o}bius function and the characteristic polynomial},
  Combinatorial geometries, Encyclopedia Math. Appl., vol.~29, Cambridge Univ.
  Press, Cambridge, 1987, pp.~114--138. \MR{921071}

\end{thebibliography}

\end{document}